\newtheorem{theorem}{Theorem}[section]
\newtheorem{lemma}[theorem]{Lemma}
\newtheorem{corol}[theorem]{Corollary}
\newtheorem{prop}[theorem]{Proposition}
\theoremstyle{definition}
\newtheorem{defin}[theorem]{Definition}
\newtheorem{exam}[theorem]{Example}
\theoremstyle{remark}
\newtheorem*{erem}{Remark}
 \def\={\setminus}
\def\md#1{#1\mbox{-}\mathrm{mod}}
\def\pr#1{#1\mbox{-}\mathrm{pro}}
\def\bp{\bullet}    \def\iso{\simeq}
\def\add{\mathop\mathrm{add}\nolimits}
\def\Dim{\mathop\mathbf{dim}}
\def\iff{if and only if }
\def\setsuch#1#2{\left\{\,#1\,|\,#2\,\right\}}
\def\rad{\mathop\mathrm{rad}}
\def\Ext{\mathop\mathrm{Ext}\nolimits}
\def\Hom{\mathop\mathrm{Hom}}
\def\End{\mathop\mathrm{End}}
\def\im{\mathop\mathrm{Im}}
\def\8{\infty}      \def\+{\oplus}
\def\*{\otimes}
\def\al{\alpha} \def\be{\beta}  \def\ga{\gamma}
       \def\la{\lambda}
 \def\Si{\Sigma}
\def\bA{\mathbf A}  \def\bB{\mathbf B}
\def\fR{\mathbf r}
\def\bC{\mathbf C}
\def\mN{\mathbb N}  \def\mZ{\mathbb Z}
\def\mA{\mathbb A}  
\def\mC{\mathbb C} \def\mL{\mathbb L}
 \def\kD{\mathcal D}
\def\kK{\mathcal K} 
 \def\kQ{\mathcal Q}
 \def\kI{\mathcal I}
    \def\dP{\mathfrak P}
\def\gS{\mathfrak s}    \def\gT{\mathfrak t}
      \def\sJ{\mathsf J}
\def\sR{\mathsf R}
\def\Mk{\Bbbk}
\def\bA{\mathbf A}
\begin{document}
\title{Derived tame Nakayama algebras}

\author{Viktor Bekkert, Hern\'{a}n Giraldo \and Jos\'{e} A. V\'{e}lez-Marulanda}

\address{Departamento de Matem\'atica, ICEx, Universidade Federal de Minas
Gerais, Av.  Ant\^onio Carlos, 6627, CP 702, CEP 30123-970, Belo
Horizonte-MG, Brasil}
\email{bekkert@mat.ufmg.br}
\address{Instituto de Matem\'{a}ticas, Universidad de Antioquia, Medell\'{\i}n, Antioquia, Colombia}
\email{hernan.giraldo@udea.edu.co}
\address{Department of Mathematics, Valdosta State University, Valdosta, GA, USA}
\email{javelezmarulanda@valdosta.edu}

\subjclass[2010]{Primary: 16G60, 16G70; Secondary: 15A21, 16E05, 18E30}

\begin{abstract}
{We determine the derived representation type of Nakayama algebras and
 prove that a derived tame Nakayama algebra without simple projective module
 is gentle or derived equivalent to some skewed-gentle algebra, and as a consequence,
 we determine its  singularity category.}
\end{abstract}

\maketitle

\section{Introduction}
Let $\Mk$ be an algebraically closed field of arbitrary characteristic, let $\bA$ be
a finite dimensional $\Mk$-algebra and let
$\kD^{b}(\md{\bA})$ be the bounded derived category of the category of finitely generated modules $\md{\bA}$. 
One of the  main problems in the representation theory of algebras is a classification of indecomposable finitely generated modules.
The dichotomy theorem of Drozd \cite{d0} divides all finite dimensional algebras according to their {\em representation type} into {\em tame} and {\em wild}.
In the case of tame algebras a classification of indecomposable modules is relatively easy, in the sense that 
for each dimension $d$ they admit a parametrization of $d$-dimensional indecomposable modules by a finite number of $1$-parameter families. The situation is much more complicated for wild algebras. This singles out the problem of establishing the representation type of a given algebra. 

During the last years there has been an active study of derived categories. In particular, a notion 
of {\em derived representation type} was introduced for finite dimensional algebras \cite{gk}. 
The tame-wild dichotomy for  derived categories over finite dimensional algebras was established in \cite{bd} (see also \cite{d2, ba}). 
The structure of the derived category is known for a few classes of finite dimensional algebras (see e.g. \cite{bm, bmm, bud2, h, hr}).

In this paper, we are interested in the case when $\bA$ is derived-tame.
The derived representation type is well-known for tree algebras \cite{b, g},
for blowing-up of tree algebras \cite{c}, 
for algebras  with radical square zero \cite{bd, bl},  and for nodal algebras \cite{bud2}.

Recall that $\bA$ is said to be a {\it Nakayama algebra}
if every left and 
right indecomposable projective $\bA$-module has a unique composition
series. It is well-known (see e.g. \cite[Thm. V.3.2]{as2} and \cite{ku}) that $\bA$ is a connected basic Nakayama algebra with $n$ non-isomorphic simple
modules if and only if $\bA$ is isomorphic to a bound quiver
algebra $\Mk \kQ/\kI$, where $\Mk \kQ$ is the path algebra of a quiver $\kQ$, which is of one of the following
two types:

 \[
 \mL_n: \xymatrix{ 1 \ar[r]^{a_1} & 2 \ar[r]^{a_2} & \cdots  \ar[r]^{a_{n-1}} & n},
 \]

 \[
  \mC_n: \xymatrix{ 0 \ar[r]^{a_0} & 1 \ar[r]^{a_1} & \cdots  \ar[r]^{a_{n-2}} &  n-1  \ar@/^1pc/[lll]^{a_{n-1}} },
 \]
 
\medskip\noindent
and $\kI$ is an admissible ideal of $\Mk \kQ$. In this case, there is a minimal
set of paths in $\kQ$ generating the ideal $\kI$, which will be denoted by $R_{\bA}$. 
A path $a_ia_{i+1}\cdots a_{i+m-1}\in R_{\bA} $ is called a \emph{minimal $m$-relation}.
We will asume also that
$\kQ_0=\{1,2,\dots, n-1, n\}$ in the case of $\kQ=\mL_n$, and $\kQ_0=\mZ/(n)$ in the case of $\kQ=\mC_n$.
Following \cite{cy} we call a Nakayama algebra $\Mk \kQ/\kI$ a \emph{line algebra} in case of $\kQ=\mL_n$,
and a \emph{cycle algebra} in case of $\kQ=\mC_n$.

We introduce a class $\kD$ of Nakayama algebras as follows. 
\begin{defin}\label{condNakayama}
A finite dimensional, basic
and connected Nakayama algebra $\bA=\Mk\kQ/\kI$ belongs to $\kD$ if the following conditions are
satisfied:

\begin{enumerate}
\item[(C1)] The ideal $\kI$ is generated by a set  of paths of length two or three.

\item[(C2)] If $a_ia_{i+1}a_{i+2}\in R_{\bA}$ then $a_{i+1}a_{i+2}a_{i+3}\in R_{\bA}$ or $a_{i-1}a_{i}a_{i+1}\in R_{\bA}$. 

\item[(C3)] If $a_ia_{i+1}a_{i+2}, a_{i+1}a_{i+2}a_{i+3}\in R_{\bA}$  then $a_{i-1}a_{i}a_{i+1}, a_{i+2}a_{i+3}a_{i+4}\notin R_{\bA}.$

\end{enumerate}
We call a minimal $3$-relation {\em isolated} if it does not satisfy the condition (C2).
\end{defin}

Recall that, if $\bA$ is an algebra of finite global dimension, then its Euler quadratic form is defined on the
Grothendieck group of $\bA$ by $\chi_{\bA}(\Dim M)=\sum_{i=0}^{\infty}(-1)^{i}\Dim_\Mk\Ext_{\bA}^{i}(M,M)$ for any $\bA$-module $M$. 

Our main results are the following theorems.

\begin{theorem} \label{t1} Let $\bA$ be a basic connected Nakayama algebra.
Then $\bA$ is derived tame if and only if one of the following conditions holds:

\begin{enumerate}
 \item[(i)] $\bA$ is line algebra and its Euler form is non-negative;
 \item[(ii)] $\bA$ belongs to the class $\kD$.
\end{enumerate}

\end{theorem}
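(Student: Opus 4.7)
The plan is to split the proof along two axes: the shape of the quiver ($\mL_n$ versus $\mC_n$) and the direction of implication. The sufficiency of (i) and (ii) is established by exhibiting an explicit tame derived structure, while the necessity is established by embedding known derived-wild algebras into any $\bA$ that fails both conditions.

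For the line case, since $\mL_n$ is acyclic and $\kI$ is admissible, $\bA$ has finite global dimension and a well-defined Euler form $\chi_{\bA}$. I would invoke the general principle, going back to the tree-algebra analyses in \cite{b,g,c}, that for a directed algebra of finite global dimension derived tameness forces $\chi_{\bA}$ to be non-negative: a dimension vector $v$ with $\chi_{\bA}(v)<0$ produces in the derived category at least a $2$-parameter family of indecomposables with dimension vector $v$, contradicting tameness. Conversely, a line Nakayama algebra with non-negative Euler form should turn out to be piecewise hereditary of Dynkin or Euclidean type, and hence derived tame; this is the classical connection between the non-negativity of the Tits/Euler form and tame type for tree algebras.

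For the cycle case, the sufficiency of (ii) would be proved by showing directly that any $\bA\in\kD$ is derived equivalent either to a gentle algebra (when $\bA$ has no minimal $3$-relations) or to a skewed-gentle algebra (when isolated $3$-relations occur). The conditions (C1)--(C3) are tailored precisely so that each isolated $3$-relation can be absorbed into a local skewed-gentle replacement via a tilting complex supported on the three vertices involved, while non-isolated $3$-relations appear only in blocks of exactly two consecutive ones by (C3), which already fit a gentle pattern. Once this reduction is performed, derived tameness follows from the known derived tameness of gentle and skewed-gentle algebras \cite{bm,bmm,bud2}.

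The necessity for the cycle case is where I expect the main obstacle. Given $\bA$ a basic connected cycle Nakayama algebra with $\bA\notin\kD$, one must show $\bA$ is derived wild by exhibiting a convex subcategory, or quotient by an idempotent-free ideal of relations, whose derived category embeds a derived-wild algebra. Three local configurations must be handled separately: failure of (C1), i.e.\ a minimal $m$-relation with $m\geq 4$; failure of (C2), i.e.\ a $3$-relation $a_ia_{i+1}a_{i+2}$ with no adjacent $3$-relation on either side; and failure of (C3), i.e.\ three consecutive overlapping $3$-relations. In each case I would isolate an explicit small subquiver with induced relations, verify its derived wildness by applying the Bekkert--Drozd matrix-problem reduction \cite{bd} (producing a wild bocs or a wild bimodule problem), and deduce derived wildness of $\bA$ from the inheritance of derived wildness under convex subcategories. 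The real technical burden lies in checking that these local patterns genuinely embed as convex subcategories of the cycle (rather than being destroyed by extra relations elsewhere on $\mC_n$), and that the three cases exhaust every cycle Nakayama algebra outside $\kD$.
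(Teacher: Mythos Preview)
Your line-algebra treatment matches the paper's (citing \cite{b,g}). But your cycle-case sufficiency argument contains a genuine inversion. In the class $\kD$, condition (C2) \emph{forbids} isolated $3$-relations: every minimal $3$-relation must have an adjacent one. Together with (C3), this forces the $3$-relations in an algebra of class $\kD$ to occur in blocks of \emph{exactly two} consecutive ones; the gentle case is precisely when there are no $3$-relations at all. It is these \emph{pairs} of $3$-relations---not isolated ones---that are absorbed into the skewed-gentle structure. Concretely, the paper sets $\Omega=\{i:a_{i-2}a_{i-1}a_i,\,a_{i-1}a_ia_{i+1}\in R_{\bA}\}$ (the middle vertex of each pair), passes to the idempotent subalgebra $\bA^\omega=e\bA e$ with $e$ summing over vertices outside $\Omega$, and observes that $\bA^\omega$ is gentle with each pair of $3$-relations replaced by two $2$-relations meeting at a special vertex. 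The tilting complex has summands $A_i\to A_{i+1}$ only for $i+1\in\Omega$. So your description of which configurations give gentle versus skewed-gentle is backwards, and the tilting complex you sketch (``supported on the three vertices involved'' in an isolated $3$-relation) does not arise inside $\kD$.

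For necessity, you have identified the right three failure modes, but the mechanism in the paper differs from yours. The paper does not extract convex subcategories or run the matrix-problem reduction directly on each local pattern. Instead it uses iterated idempotent subalgebras $\bA(i)=e\bA e$ (deleting one vertex from the cycle and collapsing its two incident arrows into one new arrow), which preserves the cycle shape, together with: for failure of (C2), a tilting complex reducing $\bA$ to a radical-square-zero algebra whose quiver is neither Dynkin nor Euclidean, so that \cite{bd} applies; for failure of (C3), an idempotent reduction that manufactures an isolated $3$-relation, hence back to the (C2) case; for failure of (C1), a minimal-counterexample induction on the number of vertices reducing to the (C2) or (C3) case. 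Your convex-subcategory plan is problematic on a cycle: a proper convex subquiver of $\mC_n$ is a line segment and loses the cyclic structure that the wildness arguments actually exploit.
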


\begin{theorem} \label{t2}  Let $\bA$ be a cycle  algebra.
Then $\bA$ is derived tame if and only if $\bA$ is gentle or derived equivalent
to some skewed-gentle algebra.
\end{theorem}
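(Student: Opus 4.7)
The plan is to combine Theorem~\ref{t1} with a structural analysis of the relation set $R_\bA$ and an explicit derived-equivalence construction for the isolated pairs of length-three relations. By Theorem~\ref{t1}, a cycle algebra $\bA=\Mk\mC_n/\kI$ is derived tame exactly when $\bA$ belongs to $\kD$, since option (i) of that theorem only concerns line algebras. This replaces the derived-tameness hypothesis by the concrete combinatorial condition that all minimal relations have length two or three and that, by (C2) and (C3), the length-three relations group themselves into disjoint \emph{pairs} $a_ia_{i+1}a_{i+2},\;a_{i+1}a_{i+2}a_{i+3}$ separated along the cycle by arrows that occur in no length-three relation.

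The first case to treat is when every element of $R_\bA$ has length two. In the cycle quiver $\mC_n$ every vertex has exactly one incoming and exactly one outgoing arrow, so the gentle axioms (at most two arrows in/out, at most one allowed continuation on each side of every arrow, relations of length two) hold automatically, and $\bA$ is gentle. For the converse in this subcase I would invoke the known derived tameness of gentle algebras \cite{bm}.

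In the remaining case, $\bA\in\kD$ contains at least one length-three pair. I would exhibit a skewed-gentle algebra $\bA^{sg}$ derived equivalent to $\bA$ by performing, independently at each isolated pair, a tilting mutation supported at the interior vertices $i+1,i+2$ of that pair. The effect of this mutation is to trade the two length-three zero relations $a_ia_{i+1}a_{i+2}$ and $a_{i+1}a_{i+2}a_{i+3}$ for a length-two zero relation together with the commutation data characteristic of a skewed-gentle vertex. Condition (C3) is used here essentially: it prevents overlap between distinct length-three pairs, so the local mutations assemble into a globally defined derived equivalence, and the endomorphism algebra of the corresponding tilting complex is skewed-gentle on the nose. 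For the converse direction one uses that skewed-gentle algebras are themselves derived tame (by the structural results referenced in \cite{bmm, bm}), so any $\bA$ derived equivalent to such an algebra is derived tame.

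The main obstacle is this tilting step: one must write down the tilting complex explicitly at each isolated pair, compute its endomorphism algebra as a bound quiver algebra, and verify that the resulting relations (length-two zero relations plus commutation/loop relations at marked vertices) satisfy the skewed-gentle axioms rather than merely producing an unspecified derived-tame algebra. Once this local model is established, the separation guaranteed by (C3) reduces the global problem to the independent treatment of each pair, and the reversibility of the construction matches cycle algebras in $\kD$ with length-three pairs to skewed-gentle cycle algebras up to derived equivalence.
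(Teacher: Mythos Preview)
Your proposal is correct and follows essentially the same route as the paper: reduce via Theorem~\ref{t1} to $\bA\in\kD$, observe that the length-two case is gentle, and in the remaining case produce an explicit tilting complex whose endomorphism algebra is skewed-gentle, with the converse coming from derived tameness of (skewed-)gentle algebras and its invariance under derived equivalence. The only minor discrepancy is that the paper's tilting complex (Proposition~\ref{sgentle}) modifies a \emph{single} stalk $T_i$ per length-three pair---replacing $A_i$ by the two-term complex $A_i\to A_{i+1}$ when $i{+}1$ is the common middle vertex in $\Omega$---rather than mutating at both interior vertices as you suggest; otherwise your outline matches the paper's argument.
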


\begin{erem}
 \begin{enumerate}
  \item In the case of line algebras, Theorem~\ref{t1} follows from \cite{b, g}.
  \item Derived equivalence classification of Nakayama algebras is known for the case
  of line algebras \cite{b, g}, and for the case of gentle cycle algebras \cite{bgs}.
  \item The piecewise heredity of truncated line algebras have been studied in \cite{hs}.
 \end{enumerate}

\end{erem}

Following \cite{bu, ha, o}, the {\em singularity category} $\kD_{sg}(\bA)$ of $\bA$ is the Verdier quotient
of $\kD^{b}(\md{\bA})$ with respect to the full triangulated subcategory consisting of perfect complexes (see \cite{ver} or e.g. \cite{k2} for the construction of the quotient category).
 As a corollary of Theorem~\ref{t2} and \cite{ka, cl},
 we determine the singularity category of a derived tame cycle algebra (cf. \cite{cy}).
 
 \begin{corol}\label{c1}  Let $\bA$ be a derived tame cycle algebra. Then there is an equivalence of
 triangulated categories
 
 $$\kD_{sg}(\bA)\cong \frac{\kD^{b}(\md{\Mk})}{[| R_{\bA} |]}, $$
 
\noindent where $\frac{\kD^{b}(\md{\Mk})}{[| R_{\bA} |]}$ denotes the triangulated orbit category in the sense of \cite{ke}.
  
 \end{corol}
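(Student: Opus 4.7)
The starting point is Theorem~\ref{t2}: since $\bA$ is a derived tame cycle algebra, either (a) $\bA$ is gentle, or (b) $\bA$ is derived equivalent to some skewed-gentle algebra $\bB$. The singularity category is a derived invariant, because a triangle equivalence $\kD^b(\md{\bA})\simeq\kD^b(\md{\bB})$ restricts to an equivalence on perfect complexes and so descends to the Verdier quotient; hence in case (b) we may replace $\bA$ by $\bB$ when computing $\kD_{sg}$. This reduces the statement to a direct computation of $\kD_{sg}$ for gentle cycle algebras and for the skewed-gentle algebras arising in Theorem~\ref{t2}.

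For case (a) the plan is to quote Kalck's theorem \cite{ka}, which for a gentle algebra expresses $\kD_{sg}$ as a product of triangulated orbit categories indexed by the \emph{forbidden} (cyclic) threads of the bound quiver. In the case $\kQ=\mC_n$ with gentle admissible ideal $\kI$ generated by paths of length two, the combinatorics of $\mC_n$ force a single forbidden thread going once around the cycle and passing through exactly the minimal relations; its length equals $|R_{\bA}|$. Thus Kalck's formula specialises to
\[
\kD_{sg}(\bA)\iso \frac{\kD^{b}(\md{\Mk})}{[|R_{\bA}|]}.
\]
For case (b) I would apply Chen--Lu's description \cite{cl} of $\kD_{sg}$ of a skewed-gentle algebra, which extends Kalck's formula by bookkeeping the additional ``special'' loops produced by skewing. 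Because the derived equivalence of Theorem~\ref{t2} is constructed through a controlled combinatorial passage between $\bA$ and $\bB$, the cyclic forbidden-thread data of $\bB$ (after accounting for the skew vertices) has the same total length $|R_{\bA}|$, and Chen--Lu's formula collapses to the same orbit category.

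The key obstacle is precisely this bookkeeping step: to see that the numerical invariant appearing in Kalck's (resp.\ Chen--Lu's) formula, computed on the gentle (resp.\ skewed-gentle) model, coincides with $|R_{\bA}|$. One has to verify that the derived equivalence produced in the proof of Theorem~\ref{t2} preserves the multiset of minimal relations up to the skew-gentle correction, and that the resulting cyclic thread remains a single orbit rather than splitting into several shorter ones. I would handle this by a case analysis following the classes in Definition~\ref{condNakayama}, using the conditions (C1)--(C3) to rule out configurations that would produce extra threads, and using the explicit skewed-gentle models underlying case (ii) of Theorem~\ref{t1} to identify the contribution of each isolated $3$-relation. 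Once this matching is in place, Corollary~\ref{c1} follows by combining the two formulas with the derived invariance of $\kD_{sg}$.
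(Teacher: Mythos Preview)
Your plan invokes the right external inputs (Kalck \cite{ka}, Chen--Lu \cite{cl}, and the derived invariance of $\kD_{sg}$), so the strategy is sound; but the paper's route is shorter and avoids exactly the ``bookkeeping'' you flag as the obstacle. Instead of splitting into case~(a) $\bA$ gentle and case~(b) $\bA$ derived equivalent to some unspecified skewed-gentle $\bB$, the paper works uniformly with the \emph{explicit} algebras $\bA^{\Omega}$ and $\bA^{\omega}$ constructed in \S\ref{s22}. Proposition~\ref{sgentle} gives $\kD^{b}(\md{\bA})\simeq\kD^{b}(\md{\bA^{\Omega}})$, and then Chen--Lu is used not as an independent skewed-gentle formula ``with bookkeeping for special loops'' but simply as the statement $\kD_{sg}(\bA^{\Omega})\cong\kD_{sg}(\bA^{\omega})$, reducing everything to the underlying gentle algebra $\bA^{\omega}$. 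Kalck's theorem is then applied to $\bA^{\omega}$, whose relation set $R_{\bA^{\omega}}$ is written down explicitly: each overlapping pair of $3$-relations in $R_{\bA}$ becomes a pair of $2$-relations, so $|R_{\bA^{\omega}}|=|R_{\bA}|$ and the identification $|C(\bA^{\omega})|=1$, $l(c)=|R_{\bA}|$ is read off directly. No case analysis along (C1)--(C3) is needed, because you never leave the concrete model.

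One point in your case~(a) deserves care: in Kalck's formula the length $l(c)$ counts \emph{arrows} in the cycle $c$, not relations. For a gentle algebra on $\mC_m$ the only candidate is the full loop $a_0\cdots a_{m-1}$, which belongs to $C(\bA)$ precisely when \emph{every} consecutive product $a_ia_{i+1}$ lies in $\kI$; its length is then $m$, and this equals $|R_{\bA}|$ only because all $m$ possible $2$-relations are present. Your phrasing ``passing through exactly the minimal relations; its length equals $|R_{\bA}|$'' blurs this, and it is exactly here that working with the explicit $\bA^{\omega}$ (rather than an arbitrary gentle cycle algebra) pays off in the paper's argument.
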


 An immediate consequence of Corollary \ref{c1} is the following result.
 
 \begin{corol}\label{c2}
  Suppose that $\bA$ and $\bB$ are derived equivalent derived tame cycle algebras.
  Then $| R_{\bA} |=| R_{\bB} |$.
  
 \end{corol}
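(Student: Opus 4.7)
The plan is to combine two facts: (a) a triangle equivalence $\kD^b(\md{\bA})\simeq \kD^b(\md{\bB})$ descends to a triangle equivalence of singularity categories, and (b) the integer $n$ can be recovered from the Keller orbit category $\kD^b(\md{\Mk})/[n]$ as an intrinsic invariant. Corollary~\ref{c1} then bridges the two: derived equivalence yields $\kD^b(\md{\Mk})/[|R_\bA|]\simeq \kD^b(\md{\Mk})/[|R_\bB|]$, and invariance of $n$ forces $|R_\bA|=|R_\bB|$.

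For (a), I would recall that the full subcategory of perfect complexes inside $\kD^b(\md{\bA})$ is intrinsically characterized (for example as the thick subcategory of compact objects, equivalently as the objects of finite projective dimension in a suitable sense). Hence any triangle equivalence between bounded derived categories of finite-dimensional algebras automatically restricts to an equivalence between their perfect subcategories, and therefore descends to a triangle equivalence of Verdier quotients $\kD_{sg}(\bA)\simeq \kD_{sg}(\bB)$. This is a standard argument due to Rickard/Keller and requires no special input for cycle algebras.

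For (b), I would use that the indecomposable objects of $\kD^b(\md{\Mk})$ are precisely the shifts $\Mk[i]$, $i\in\mZ$. Under the quotient functor to the orbit category $\kD^b(\md{\Mk})/[n]$, the objects $\Mk[i]$ and $\Mk[j]$ become isomorphic if and only if $n\mid i-j$, so the set of isomorphism classes of indecomposable objects of this orbit category has cardinality exactly $n$. Since this cardinality is an invariant of the triangulated category, applying it to both sides of
\[
\kD^b(\md{\Mk})/[|R_\bA|]\;\simeq\; \kD^b(\md{\Mk})/[|R_\bB|]
\]
obtained from Corollary~\ref{c1} yields $|R_\bA|=|R_\bB|$.

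The only delicate point is (a): one must ensure that perfect complexes are preserved by the derived equivalence in question. This is standard for finite-dimensional algebras, but it is the step where care is needed. Once it is in place, the remainder is a direct comparison of cardinalities of indecomposables in Keller's orbit category, and the corollary follows immediately.
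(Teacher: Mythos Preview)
Your proposal is correct and follows precisely the approach the paper intends: the paper simply declares Corollary~\ref{c2} an ``immediate consequence'' of Corollary~\ref{c1}, and you have spelled out the two standard facts (derived invariance of the singularity category via the intrinsic characterization of perfect complexes, and recoverability of $n$ from the orbit category $\kD^b(\md{\Mk})/[n]$ by counting indecomposables) that make this immediate.
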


The structure of this article is as follows. In Section~\ref{s1},
we review some preliminary results about derived categories and derived representation type classification of finite dimensional $\Mk$-algebras. Moreover, we also recall the definitions of gentle and skewed-gentle algebras as initially provided in \cite{as} and \cite{gp}, respectively.  In Section~\ref{s2}, we prove Theorems~\ref{t1} and  \ref{t2}, and Corollary~\ref{c1}.

\section{Preliminaries}\label{s1}

\subsection{Derived representation type}\label{s11}

Let $\bA$ be an associative finite dimensional  $\Mk$-algebra.
We denote by  $\md{\bA}$  the category of left finitely
generated $\bA$-modules, by $\kD(\bA)$  its derived category, and by $\kD^{b}(\md{\bA})$ the derived category of
bounded complexes whose terms are in $\md{\bA}$. It is well-known that $\kD^{b}(\md{\bA})$ can be identified with the
homotopy category $\kK^{-,b}(\pr{\bA})$ of bounded above complexes of finitely generated projective
$\bA$-modules with bounded homologies. Recall that every object in $\kK^{-,b}(\pr{\bA})$ is homotopy equivalent to a \emph{minimal}
one (see e.g. \cite{k} and \cite{gm}), i.e., to  a complex $C_{\bullet}=(C_n, d_n)$ such that $\im d_n\subseteq \rad C_{n-1}$ for all $n$. If $C_{\bullet}$
and $C^{\prime}_{\bullet}$ are two minimal complexes, then they are isomorphic in $\kD(\bA)$ if and only if they are isomorphic as complexes.
Moreover, any morphism $f: C_{\bullet}\to C^{\prime}_{\bullet}$ in $\kD(\bA)$ can be presented by a morphism of complexes, and
$f$ is an isomorphism if and only if the latter one is. For convenience, we write composition of morphisms from left to right.

Let $A_1, A_2, \ldots, A_t$ be all pairwise non-isomorphic indecomposable projective $\bA$-modules (all of them are direct summands of $\bA$).
If $P$ is a finitely generated projective $\bA$-module, then $P$ decomposes uniquely as 
$$P=\bigoplus_{i=1}^{t}p_iA_i,$$
where for all $1\leq i\leq t$,  $p_i$ is a non-negative integer.  
Denote by $\fR(P)$ the
vector $(p_1,p_2,\ldots,p_t)$. Let $P_\bullet=(P_n,d_n)$ be a bounded complex whose terms are finitely generated projective $\bA$-modules. The sequence $$(\dots,\fR(P_n),\fR(P_{n-1}),\dots)$$ (it
 has only finitely many nonzero entries)
is called the \emph{vector rank} $\fR_\bp(P_\bp)$
 of $P_\bp$.

The following definition provides a version of derived tameness and wildness from \cite{bd} for finite dimensional algebras.
  \begin{defin}\label{tw}
  \begin{enumerate}
 \item[(i)]
  We call a \emph{rational family} of bounded minimal complexes over $\bA$ a bounded complex $(P_\bp,d_\bp)$
 of finitely generated projective $\bA\*\sR$-modules, where $\sR$ is a \emph{rational algebra},
 i.e. $\sR=\Mk[t,f(t)^{-1}]$ for a nonzero polynomial $f(t)$, and $\im d_n\subseteq\sJ P_{n-1}$, where $\sJ=\rad\bA$.
 For a  rational family $(P_\bp,d_\bp)$ we define the complex $P_\bp(m,\la)=(P_\bp\*_\sR\sR/(t-\la)^m,d_\bp\*1)$ of projective $\bA$-modules, where $m\in\mN,\,\la\in\Mk,\,f(\la)\ne0$. Set $\fR_\bp(P_\bp)=
 \fR_\bp(P_\bp(1,\la))$ ($\fR_\bp$ does not depend on $\la$).
 \item[(ii)]
  We call $\bA$ \emph{derived tame} if there is a set $\dP$ of rational families of bounded complexes over
 $\bA$ such that:
  \begin{enumerate}
 \item[(ii.a)]  for each vector rank $\fR_\bp$ the set $\dP(\fR_\bp)=\setsuch{P_\bp\in\dP}{\fR_\bp(P_\bp)=\fR_\bp}$ is finite;
 \item[(ii.b)]
  for each vector rank $\fR_\bp$ all indecomposable complexes $(P_\bp,d_\bp)$ of projective $\bA$-modules
 of this vector rank, except finitely many isomorphism classes, are isomorphic to $P_\bp(m,\la)$ for some $P_\bp\in\dP$
 and some $in \mathbb{N}$ and $\la\in \Mk$.
\end{enumerate}
 The set $\dP$ is called a \emph{parameterizing set} of $\bA$-complexes.
 \item[(iii)]
  We call $\bA$ \emph{derived wild} if there is a bounded complex $(P_\bp,d_\bp)$ of projective modules over
$\bA\*\Si$, where $\Si$ is the free $\Mk$-algebra in 2 variables,
such that $\im d_n\subseteq\sJ P_{n-1}$ and, for any finite dimensional $\Si$-modules $L,L'$:
  \begin{enumerate}
\item[(iii.a)]   $P_\bp\*_\Si L\iso P_\bp\*_\Si L'$ \iff $L\iso L'$; and 
 \item[(iii.b)]
  $P_\bp\*_\Si L$ is indecomposable \iff so is $L$.
\end{enumerate}
\end{enumerate}
 \end{defin}

Note that, according to Definition \ref{tw}, every \emph{derived discrete} (in particular, \emph{derived finite}) algebra
 \cite{v} is derived tame (with the empty set  $\dP$). On the other hand, it is proved in \cite{bd} that every finite dimensional algebra over an
algebraically closed field is either derived tame or derived wild.

\subsection{Quivers with relations}\label{s12}

A \emph{quiver} $\kQ$ is a tuple $(\kQ_0,\kQ_1,\gS,
\gT)$ consisting of a set $\kQ_0$ of \emph{vertices}, a
set $\kQ_1$ of \emph{arrows}, and maps $\gS,\gT: \kQ_1
\rightarrow \kQ_0$ which specify the \emph{starting} and \emph{ending} vertices for each arrow $a\in \kQ_1$.
Given two vertices $i$ and $j$ we define $\kQ_1[i,j]$ as the set of all arrows
from $i$ to $j$.
A \emph{path} $p$ in $\kQ$ of length $\ell(p) = n \geq 1$ is a
sequence of arrows $a_1,\dots,a_n$ such that
$\mathfrak{s}(a_{i+1}) = \mathfrak{t}(a_i)$ for $1 \leq i < n$.
In this situation, we set $\mathfrak{s}(p) = \mathfrak{s}(a_1)$ and $\mathfrak{t}(p) =
\mathfrak{t}(a_n)$. Note that we write paths from left to right  for convenience. On the other hand, the concatenation $pp^{\prime}$ of two paths $p$,
$p^{\prime}$ in $\kQ$ is defined in the natural way whenever $\gS(p')=\gT(p)$. Every vertex $i \in \kQ_0$ determines a path $e_i$ (of length $0$) with $\gS(e_i) = i$ and
$\gT(e_{i}) = i$.
A quiver $\kQ$ determines  the path algebra $\Mk\kQ$, which has a
$\Mk$-basis consisting of all the paths in $\kQ$, and the multiplication is given by the path-concatenation provided that exists, or zero otherwise. The algebra $\Mk\kQ$ is finite-dimensional precisely when $\kQ$ does not contain an oriented cycle.
An ideal $\kI\subseteq \Mk\kQ$ is called \emph{admissible} if $\kI\subseteq \rad^{2}(\Mk\kQ)$ where $\rad(\Mk\kQ)$ is the radical of the algebra $\Mk\kQ$.
It is well-known that if $\Mk$ is algebraically closed, any finite-dimensional $\Mk$-algebra is Morita
equivalent to a quotient $\Mk\kQ/\kI$ where $\kI$ is an admissible ideal.
By a slight abuse of notation, we identify paths in the quiver $\kQ$ with their 
cosets in $\Mk\kQ/\kI$.

We denote by $R_{\bA}^{m}$ (resp., $R_{\bA}^{\geq m}$,  resp., $R_{\bA}^{\leq m}$ )   the set of
minimal relations in $R_{\bA}$ of length $m$ (resp., greater or equal that $m$,  resp., 
less or equal that $m$).

For a vertex $i\in \kQ_0$ such that $a_{i-1}a_i\neq 0$, 
let $e=\sum_{j\in\kQ_{0}\setminus \{i\}}e_j$ and 
we denote by $\bA(i)$ the full subalgebra $e\bA e$ of the algebra $\bA$.
Then we can assume that
$\bA(i)=\Mk\kQ_{\bA(i)}/\kI_{\bA(i)}$, where 
$(\kQ_{\bA(i)})_0=\kQ_0\setminus \{i\}$, $(\kQ_{\bA(i)})_1=(\kQ_1\setminus \{a_{i-1},a_{i}\})\bigcup \{g=a_{i-1}a_{i}\}$
and $\kI_{\bA(i)}=\kI\bigcap \bA(i)$. 

\subsection{Gentle algebras}\label{s13}

Let $\kQ$ be a quiver and let $\kI$ be an admissible ideal in the path algebra  $\Mk\kQ$.
\begin{defin}
 \label{specialbi}
The pair $(\kQ,\kI)$ is said to be \emph{special biserial}  \cite{but,sw} if
the following conditions hold.
\begin{itemize}
\item[(G1)] At every vertex of $\kQ$ at most two arrows end and at most two
arrows start.
\item[(G2)] For each arrow $b$ there is at most one arrow $a$ with
$\gT(a)=\gS(b)$ and $ab\not\in \kI$ and at most one arrow $c$ with
$\gT(b)=\gS(c)$ and $bc\not\in \kI$.
\end{itemize}
\end{defin}

\begin{defin}\label{defgentle}
The pair $(\kQ,\kI)$ is said to be \emph{gentle} \cite{as} if
it is special biserial, and moreover the following conditions hold.

\begin{itemize}
\item[(G3)] $\kI$ is generated by zero relations of length 2.
\item[(G4)] For each arrow $b$ there is at most one arrow $a$ with
$\gT(a)=\gS(b)$ and $ab\in \kI$ and at most one arrow $c$ with
$\gT(b)=\gS(c)$ and $bc\in \kI$.
\end{itemize}

A $\Mk$-algebra $\bA$ is called \emph{gentle} \cite{as},
if it is Morita-equivalent to a factor algebra $\Mk\kQ/\kI$,
where the par $(\kQ,\kI)$ is gentle.
\end{defin}

The next theorem follows from \cite{ps} (see also \cite{bm}).

\begin{theorem}\label{thm-gentle-tame}
Any gentle algebra is derived tame.

\end{theorem}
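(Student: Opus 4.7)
The plan is to invoke the classification of indecomposable objects in $\kD^{b}(\md{\bA})$ for a gentle algebra $\bA$ established in~\cite{bm} (equivalently \cite{ps}). That classification asserts that every indecomposable object of $\kK^{-,b}(\pr{\bA})$ is isomorphic either to a \emph{string complex} $P_\bp(\si)$ attached to a homotopy string $\si$, or to a \emph{band complex} $P_\bp(\be,m,\la)$ attached to a homotopy band $\be$, a positive integer $m$, and a scalar $\la\in\Mk\setminus\{0\}$. The combinatorial axioms for homotopy strings and bands are set up so that gentleness (G1)--(G4) makes the prescribed matrix of paths a well-defined minimal differential on the associated direct sum of indecomposable projectives.

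First I would recast this classification in the language of Definition~\ref{tw}. For each homotopy band $\be$ I would build a rational family $P_\bp(\be)$ of bounded minimal complexes of finitely generated projective $\bA\otimes\sR$-modules, with $\sR=\Mk[t,t^{-1}]$: the terms are finite direct sums of projectives $A_i\otimes\sR$ indexed by the vertices visited by $\be$, and the differential is the matrix of paths prescribed by $\be$, with a single factor of $t$ inserted on the entry where the band closes up. A direct check shows that tensoring with $\sR/(t-\la)^m$ recovers the band complex $P_\bp(\be,m,\la)$. Take $\dP$ to be the set of all such rational families as $\be$ runs over representatives of homotopy bands modulo cyclic rotation and inversion.

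To verify the finiteness conditions of Definition~\ref{tw}(ii), I would fix a vector rank $\fR_\bp$. Its total degree is bounded, so any homotopy string or band whose associated complex has $(1,\la)$-specialization of that rank is a sequence of arrows of bounded total length. Gentleness (G1)--(G4) allows at each vertex at most two choices for how a string or band can continue, so only finitely many such sequences exist; hence both the set of string complexes of rank $\fR_\bp$ and the set $\dP(\fR_\bp)$ are finite. Combined with the completeness of the string-and-band list, this delivers (ii.a) and (ii.b).

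The main obstacle---entirely absorbed by the citation of~\cite{bm}---is exactly that completeness statement: every minimal bounded complex of projective $\bA$-modules which is indecomposable in $\kD(\bA)$ is isomorphic to a string or a band complex. In~\cite{bm} this is proved by reducing the classification of minimal complexes over a gentle algebra to a tame bocs-type matrix problem, whose indecomposables are known to be precisely strings and bands.
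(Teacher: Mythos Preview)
Your proposal is correct and aligns with one of the paper's two citations. The paper itself offers no argument: it records the theorem as a consequence of \cite{ps} (with \cite{bm} as an alternative). Your sketch unpacks the \cite{bm} route---the explicit string/band classification of indecomposables in $\kD^b(\md{\bA})$ and the packaging of band complexes into rational families over $\Mk[t,t^{-1}]$---and the finiteness counts you give for a fixed vector rank are the right ones. The primary citation \cite{ps} points instead to the repetitive-algebra argument: the repetitive algebra $\hat{\bA}$ of a gentle algebra is special biserial, hence module-tame, and Happel's embedding of $\kD^b(\md{\bA})$ into the stable module category of $\hat{\bA}$ transports tameness to the derived level. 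Either route suffices; yours is self-contained at the level of complexes and matches Definition~\ref{tw} directly, while the repetitive-algebra route recycles the classical module classification for special biserial algebras rather than redoing the matrix problem for complexes.
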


\subsection{Skewed-gentle algebras}\label{s14}

Let $\kQ$ be a quiver with a fixed distinguished set of
vertices, which we denote by $Sp$, and $R$ a set of relations for $\kQ$.
We call the elements of $Sp$ \emph{special vertices}, and the
remaining vertices are called \emph{ordinary}.

For a triple $(\kQ, Sp, R)$, we consider the pair $(\kQ^{sp}, R^{sp})$,
where $\kQ^{sp}_{0}:=\kQ_0$,
$\kQ^{sp}_1:=\kQ_1\cup \{a_i\,|\,i\in Sp\}$, $\gS(a_i):=\gT(a_i):=i$
and $R^{sp}:=R\cup \{a^{2}_i\,|\,i\in Sp\}$.

\begin{defin}\label{def-skewed-gentle-triple}
A triple $(\kQ, Sp, R)$ as above is called \emph{skewed-gentle}
if the corresponding pair $(\kQ^{sp}, \left< R^{sp}\right>)$ is gentle.
\end{defin}

Let $(\kQ, Sp, R)$ be a skewed-gentle triple. 
We associate to each vertex $ i \in \kQ_0$  a set, which we will denote by $\kQ_0(i)$ in the following way.
If $i\not\in Sp$, then $\kQ_0(i) = \{i\}$, and if $i\in Sp$, then $\kQ_0(i) = \{i^{-},i^{+}\}$.
The quiver with relations $(\kQ^{sg},R^{sg})$ is defined in the following way:
$$\kQ^{sg}_0:=\cup_{i\in \kQ_0}\kQ_0(i),$$
$$\kQ^{sg}_1[\al, \be]:=\{(\al, a, \be)\,|\,a\in \kQ_1, \al\in \kQ_0(\gS(a)),
\be\in \kQ_0(\gT(a))\},$$
$$R^{sg}:=\left\{\sum_{\be\in \kQ_0(\gS(b))}\la_\be(\al, a, \be)(\be, b, \ga)\,|\,
ab\in R, \al\in \kQ_0(\gS(a)), \ga\in \kQ_0(\gT(b))\right\},$$
\noindent where $\la_\be=-1$ if $\be=i^{-}$ for some $i\in \kQ_0$,
and $\la_\be=1$ otherwise.
\noindent
Note that the relations in $R^{sg}$ are zero-relations or
commutative relations.
We denote by $a^{-}$ (resp., $a^{+}$) the arrows of the form $(i^{-},a,j)$ or $(i,a,j^{-})$
(resp., $(i^{+},a,j)$ or $(i,a,j^{+})$).

\begin{defin}\label{def-skewed-gentle}
A $\Mk$-algebra $\bA$ is called \emph{skewed-gentle} \cite{gp},
if it is Morita-equivalent to a factor algebra $\Mk\kQ^{sg}/\left< R^{sg}\right>$,
where the triple $(\kQ, Sp, R)$ is skewed-gentle.
\end{defin}

The next theorem follows from \cite{gp} (see also \cite{bmm}).

\begin{theorem}\label{thm-skewed-gentle-tame}
Any skewed-gentle algebra is derived tame.

\end{theorem}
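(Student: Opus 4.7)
The plan is to deduce derived tameness of a skewed-gentle algebra
$\bA=\Mk\kQ^{sg}/\langle R^{sg}\rangle$ from Theorem~\ref{thm-gentle-tame}
applied to its associated gentle algebra
$\bC:=\Mk\kQ^{sp}/\langle R^{sp}\rangle$.

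First I would establish a Morita-type identification between $\bA$ and a
mild deformation of $\bC$. Consider the algebra
\[
\widetilde{\bC}:=\Mk\kQ^{sp}/\langle R\cup\{a_i^{2}-a_i:i\in Sp\}\rangle,
\]
in which each loop relation $a_i^{2}=0$ of $\bC$ is replaced by the
idempotency relation $a_i^{2}=a_i$. For each special vertex $i$, the
elements $e_{i^{+}}:=a_i$ and $e_{i^{-}}:=e_i-a_i$ are then orthogonal
idempotents refining $e_i$. Splitting each arrow $b$ incident to a
special vertex according to this refinement, and rescaling by signs where
needed to absorb the factor $\la_\be=-1$ from $R^{sg}$, the resulting
Peirce decomposition of $\widetilde{\bC}$ realises exactly the quiver
$\kQ^{sg}$ with the sign-weighted relations $R^{sg}$, so $\widetilde{\bC}$
is Morita equivalent to $\bA$. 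Since derived tameness is a Morita
invariant, it is enough to show that $\widetilde{\bC}$ is derived tame.

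I would then invoke the Bekkert--Drozd bocs reduction from \cite{bd},
applied simultaneously to $\bC$ and $\widetilde{\bC}$. Both algebras have
the same underlying quiver and the same relations outside the special
loops, so their associated bocses share the same bigraph and the
differentials differ only at the special loops. A direct computation
(using the substitution $a_i\mapsto a_i-\tfrac12$ to complete the square
when $\operatorname{char}\Mk\neq 2$, and a separate reduction in
characteristic $2$) shows that the two bocses reduce to the same
\emph{bunch of (semi)chains} matrix problem, which is tame by Bondarenko's
classification. The parameterising set $\dP$ required by
Definition~\ref{tw}(ii) is then read off from this bunch: the bands
contribute one-parameter families of bounded minimal projective
complexes over $\bA$, while ordinary and asymmetric string words account
for the finitely many discrete exceptions in each vector rank. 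Finiteness
per vector rank follows from the combinatorial fact that only finitely
many string or band words realise a prescribed vector of projective
multiplicities.

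The main obstacle is the bocs reduction step, and specifically the
uniform control of the deformation from $\bC$ to $\widetilde{\bC}$ at the
bocs level, including the delicate small-characteristic case. This is
exactly what is carried out in detail in \cite{bmm}, building on the
module-level classification of \cite{gp}, so in practice the argument
reduces to citing those references for the combinatorial reduction and
verifying that the resulting families of complexes satisfy the two
finiteness conditions of Definition~\ref{tw}.
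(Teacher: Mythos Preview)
The paper does not actually prove this statement: it records it as a consequence of \cite{gp} and \cite{bmm} and moves on. Your proposal is therefore not competing with a proof in the paper but rather sketching what those references do, and in broad outline your sketch is faithful to the strategy of \cite{bmm}: one passes from the skewed-gentle algebra to a matrix problem of bunch-of-semichains type and invokes Bondarenko's tameness result, with the module-theoretic groundwork laid in \cite{gp}.

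Two small points are worth flagging. First, your opening sentence promises to \emph{deduce} the result from Theorem~\ref{thm-gentle-tame}, but the argument you then outline does not use that theorem as a black box; it re-runs the same kind of bocs/matrix-problem analysis in parallel for $\bC$ and $\widetilde{\bC}$. So the dependence on Theorem~\ref{thm-gentle-tame} is only heuristic, and you should say so. Second, the Morita identification $\widetilde{\bC}\simeq\bA$ via the idempotent splitting $e_i=a_i+(e_i-a_i)$ is correct (indeed both algebras are basic, so this is an isomorphism), but the delicate part---that the resulting bocs really reduces to a tame bunch of semichains, uniformly in characteristic---is exactly the content of \cite{bmm}, as you acknowledge at the end. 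In short, your proposal and the paper agree: the theorem is a citation, and the substance lives in \cite{gp,bmm}.
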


\subsection{Derived equivalence and derived tameness}\label{s15}

We recall that if for finite dimensional $\Mk$-algebras $\bA$ and $\bB$, the derived
categories $\kD^{b}(\md{\bA})$ and $\kD^{b}(\md{\bB})$ are equivalent as triangulated categories,
then $\bA$ and $\bB$  are  said to be \emph{derived equivalent}. By a fundamental result due to Rickard \cite{r}, this
happens exactly when there exists a complex  $T_\bp$ in $\kK^{b}(\pr{\bA})$ (called a \emph{tilting complex})
with the following properties:
\begin{itemize}
 \item[(i)] $\Hom_{\kD^{b}(\md{\bA})}(T_\bp, T_\bp[i])=0$ for $i\neq 0$ (where $[-]$ denote the shift functor);
 \item[(ii)] $\add(T_\bp)$, the full subcategory of $\kK^{b}(\pr{\bA})$ consisting of direct summands of direct sums of
 copies of $T_\bp$, generates $\kK^{b}(\pr{\bA})$ as a triangulated category;
 \item[(iii)] $\bB\cong \End_{\kD^{b}(\md{\bA})}(T_\bp)$.
\end{itemize}

We recall the following result from \cite{gk}.

\begin{theorem}\label{dereq}
Derived tameness is preserved under derived equivalence.

\end{theorem}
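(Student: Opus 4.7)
The plan is to transport a parameterizing set from $\bA$ across the derived equivalence to $\bB$ and then verify conditions (ii.a) and (ii.b) of Definition~\ref{tw} for the transported set.

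First, I would invoke Rickard's theorem to fix a tilting complex $T_\bp \in \kK^{b}(\pr{\bA})$ realizing the equivalence $F\colon \kD^{b}(\md{\bA}) \to \kD^{b}(\md{\bB})$. For any commutative $\Mk$-algebra $\sR$---and in particular for a rational algebra $\sR = \Mk[t, f(t)^{-1}]$---flat base change shows that $T_\bp \otimes_\Mk \sR$ remains a tilting complex in $\kK^{b}(\pr{\bA \otimes_\Mk \sR})$ with endomorphism algebra $\End_\bA(T_\bp) \otimes_\Mk \sR \iso \bB \otimes_\Mk \sR$. Hence $T_\bp \otimes \sR$ induces a derived equivalence $F_\sR\colon \kD^{b}(\md{\bA \otimes \sR}) \to \kD^{b}(\md{\bB \otimes \sR})$ compatible with base change along $\sR \to \sR/(t-\la)^m$, so that $F_\sR$ intertwines the specialization operation $(-)(m, \la)$ up to natural isomorphism.

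Next, given a parameterizing set $\dP$ for $\bA$, I would take for each $(P_\bp, d_\bp) \in \dP$ a minimal representative of $F_\sR(P_\bp, d_\bp)$ in $\kK^{-, b}(\pr{\bB \otimes \sR})$ and collect these into a candidate set $\dP'$ for $\bB$. To check (ii.b), I would argue that every indecomposable minimal bounded complex of projective $\bB$-modules is the image under $F$ of an indecomposable minimal complex in $\kK^{-, b}(\pr{\bA})$ (using uniqueness of minimal models), which by the hypothesis on $\bA$ is, outside a finite exceptional set, of the form $P_\bp^{(0)}(m, \la)$ for some $P_\bp^{(0)} \in \dP$. Applying $F$ again and invoking the specialization compatibility from the previous step would identify it with $Q_\bp^{(0)}(m, \la)$ for the corresponding $Q_\bp^{(0)} \in \dP'$.

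For (ii.a), I would exploit that $F$ and its quasi-inverse each change the total $\Mk$-dimension of a bounded complex by a factor bounded above by the sum of the ranks of the terms of $T_\bp$ (respectively of the corresponding tilting complex over $\bB$). Consequently, each vector rank $\fR_\bp$ over $\bB$ has only finitely many possible source vector ranks over $\bA$, and the finiteness axiom (ii.a) for $\dP$ then forces finiteness of $\dP'(\fR_\bp)$. The point I expect to require the most care is the bookkeeping of vector ranks through the passage to minimal models: I must check that the image of a rational family over $\bA \otimes \sR$, once replaced by a minimal projective model over $\bB \otimes \sR$, still yields, when specialized at $(t-\la)^m$, the minimal model of $F(P_\bp(m, \la))$, so that specializations of families in $\dP'$ genuinely represent the images of specializations in $\dP$. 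A parallel argument applied to the quasi-inverse equivalence (with its own parameterizing set over $\bB$ produced from $\dP$) would give the statement symmetrically, but is not strictly needed for the single implication above.
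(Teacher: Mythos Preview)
The paper does not prove this theorem at all: it is stated as a quotation from the literature, introduced by ``We recall the following result from \cite{gk}.'' So there is no argument in the paper to compare your proposal against; the authors simply import the result of Gei{\ss} and Krause and use it as a black box.

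Your outline is a plausible strategy for an independent proof, and indeed is close in spirit to how such invariance statements are typically established. A few points would need genuine work beyond what you have written. First, a derived equivalence need not send bounded complexes of projectives to bounded complexes of projectives when the algebras have infinite global dimension; you implicitly pass through $\kK^{-,b}(\pr{\bB})$ and then take minimal models, but the parameterizing families in Definition~\ref{tw} are required to lie in $\kK^{b}(\pr{\bB\otimes\sR})$, so one must argue that the image of a bounded family of projectives is again perfect over $\bB\otimes\sR$ (this follows from the fact that tilting complexes are perfect and that the equivalence restricts to perfect complexes). Second, the step you flag yourself---that taking a minimal projective model over $\bB\otimes\sR$ commutes with specialization at $(t-\la)^m$---is the crux and is not automatic; one needs $\sR$ to be a principal ideal domain (which it is) and to use that minimality over $\sR$ transfers to minimality over each residue field. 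Third, the bound on vector ranks in your argument for (ii.a) is stated in terms of total $\Mk$-dimension, but vector rank is a finer invariant; one has to check that only finitely many vector ranks over $\bA$ can map to a given vector rank over $\bB$, which requires controlling the degree spread introduced by tensoring with the tilting complex. None of these is fatal, but each is a real verification rather than a formality, and together they constitute essentially the content of the cited paper.
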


\section{Classification}\label{s2}

\subsection{Derived wildness}\label{s21}

The following technical lemmata are needed for the proofs of the main theorems.
 
\begin{lemma}\label{subalg}\cite[Lemma 3.1]{bdf}
Let  $\bB$ be a full subalgebra of $\bA $
(i.e., a subalgebra of the form $e\bA e$ for some idempotent $e$).
If $\bB$ is derived wild then $\bA$ is derived wild.
\end{lemma}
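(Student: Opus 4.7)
The plan is to transfer a derived-wildness witness from $\bB$ to $\bA$ via the base-change functor $F:=\bA e\otimes_{\bB}-$, where $\bA e$ carries its canonical $(\bA,\bB)$-bimodule structure. First I would verify that $F$ restricts to a fully faithful additive functor on finitely generated projectives: writing $e$ as a sum of primitive orthogonal idempotents $e_{i_1},\ldots,e_{i_k}$ of $\bA$, one has $F(\bB e_{i_j})=\bA e_{i_j}$ and
\[
\Hom_{\bA}(\bA e_{i_j},\bA e_{i_l})=e_{i_j}\bA e_{i_l}=e_{i_j}\bB e_{i_l}=\Hom_{\bB}(\bB e_{i_j},\bB e_{i_l}),
\]
because $e_{i_j}e=e_{i_j}$ and $\bB=e\bA e$. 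Extending componentwise, $F$ induces a fully faithful triangulated functor $\kK^{b}(\pr{\bB})\to\kK^{b}(\pr{\bA})$.

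Next, given a wildness witness $(P_\bp,d_\bp)$ over $\bB\otimes\Si$ as in Definition~\ref{tw}(iii), I would form $Q_\bp:=\bA e\otimes_{\bB}P_\bp$. This is a bounded complex of $\bA\otimes\Si$-modules, the right $\Si$-action being untouched by $F$, and each term is projective since $F(\bB\otimes\Si)=\bA e\otimes\Si$ is a direct summand of $\bA\otimes\Si$. The minimality condition transfers via the classical identity $\rad(e\bA e)=e\rad(\bA)e$, which yields
\[
\bA e\cdot\rad(\bB)=\bA e\cdot e\rad(\bA)e\subseteq\rad(\bA)\cdot\bA e,
\]
so $\im F(d_n)\subseteq\rad(\bA)Q_{n-1}$.

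To verify conditions (iii.a) and (iii.b) of Definition~\ref{tw} for $Q_\bp$, the crucial identification is $Q_\bp\otimes_{\Si}L=F(P_\bp\otimes_{\Si}L)$ for any finite-dimensional $\Si$-module $L$, which follows from associativity of tensor products. Full-faithfulness of $F$ on homotopy categories of projectives then gives $Q_\bp\otimes L\iso Q_\bp\otimes L'$ \iff $P_\bp\otimes L\iso P_\bp\otimes L'$ \iff $L\iso L'$, and similarly $\End(Q_\bp\otimes L)\cong\End(P_\bp\otimes L)$ is local \iff $L$ is indecomposable. Hence $Q_\bp$ witnesses derived wildness of $\bA$.

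The main technical point is to carefully track the left $\bB$-module and right $\Si$-module structures of $P_\bp$ and to verify that tensoring with the bimodule $\bA e$ on the $\bB$-side preserves both projectivity and the radical condition after extending scalars to $\Si$; these are routine but notationally delicate. The actual mathematical content is that a fully faithful embedding on projectives suffices to transport derived wildness.
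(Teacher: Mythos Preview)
The paper does not supply its own proof of this lemma; it simply quotes the statement from \cite[Lemma~3.1]{bdf}. So there is nothing in the present paper to compare your argument against.

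That said, your proof is correct and is essentially the standard argument (and very likely the one in \cite{bdf}). The functor $F=\bA e\otimes_{\bB}-$ is the canonical embedding $\pr{\bB}\hookrightarrow\pr{\bA}$, fully faithful because $e_{i_j}\bA e_{i_l}=e_{i_j}(e\bA e)e_{i_l}$; this extends to a fully faithful triangulated functor on $\kK^{b}(\pr{-})$, and since $P_\bp\otimes_\Si L$ lands in $\kK^{b}(\pr{\bB})$ for every finite-dimensional $L$, both reflection of isomorphism and of indecomposability follow. Your check of the radical condition via $\rad(e\bA e)=e\rad(\bA)e$ is the right one, and the associativity $(\bA e\otimes_{\bB}P_\bp)\otimes_\Si L\cong \bA e\otimes_{\bB}(P_\bp\otimes_\Si L)$ is exactly what is needed to pull conditions (iii.a) and (iii.b) back. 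The only cosmetic point is that you might state explicitly that $P_n\otimes_\Si L$ is a finitely generated projective $\bB$-module (it is a summand of $(\bB\otimes\Si)^m\otimes_\Si L\cong \bB^{m\dim L}$), so that the full-faithfulness of $F$ on $\kK^{b}(\pr{\bB})$ genuinely applies; but this is implicit in what you wrote.
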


 We now define a special class of cycle truncated $\Mk$-algebras as follows.
 Let $n>0, r\geq 2$ and set $\bC(n,r)=\Mk \kQ/ \kI$, where $\kQ=\mC_n$ and
 $\kI$ is generated by the set $\{a_ia_{i+1}\cdots a_{i+r-1}\, |\, i\in \kQ_0\}$.
 Note that a similar class of line algebras has been investigated in \cite{hs}.

 \begin{lemma}\label{selfinjective}
 Let $\bA=\bC(n,r)$  be a cycle truncated algebra.
Then $\bA$ is  derived tame if and only if $r=2$.
\end{lemma}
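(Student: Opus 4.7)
My approach splits on the value of $r$. For $r=2$, I would check directly that the pair $(\mC_n,\kI)$ with $\kI=\langle a_ia_{i+1}:i\in\mZ/n\rangle$ is gentle: every vertex of $\mC_n$ has exactly one incoming and one outgoing arrow, so (G1) is immediate; the unique composition of two consecutive arrows lies in $\kI$, whence (G2) and (G4) are trivially satisfied; and (G3) holds by construction. Derived tameness then follows from Theorem~\ref{thm-gentle-tame}.

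For $r\ge 3$ I plan to produce a derived wild full subalgebra and conclude via Lemma~\ref{subalg}. Fixing $m$ with $m\le n-r+1$, set $e=\sum_{i=0}^{m-1}e_i$. Any path in $\mC_n$ with source and target in $\{0,\dots,m-1\}$ either stays within this interval --- in which case it is a direct path of length at most $m-1$ --- or wraps around the cycle, in which case its length is at least $n-m+1\ge r$, and therefore vanishes in $\bA=\Mk\mC_n/\rad^r$. Consequently $e\bA e\iso\bL(m,r):=\Mk\mL_m/\rad^r$, the truncated line algebra on $m$ vertices.

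The crux is then to show that $\bL(m,r)$ is derived wild for $r\ge 3$ once $m$ is large enough. I would appeal to the derived representation type classification of Nakayama line algebras in \cite{b,g} (and the piecewise-heredity analysis of \cite{hs}): such an algebra is derived tame exactly when its Euler quadratic form $v\mapsto v^TC^{-T}v$ is positive semidefinite, where $C$ is the Cartan matrix with $C_{ij}=1$ iff $j\le i\le\min(j+r-1,m)$ and $0$ otherwise. For each $r\ge 3$ a direct computation shows that this form becomes indefinite once $m$ exceeds an explicit threshold $m_0(r)$, so any $n\ge m_0(r)+r-1$ immediately supplies the desired wild full subalgebra. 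The finitely many small pairs $(n,r)$ outside this range must be handled individually --- for instance by choosing an idempotent with smaller support so that a few wrap-around paths survive and produce a quiver with extra chords, on which one can build a wild family of projective complexes directly in the sense of Definition~\ref{tw}(iii). Everything past this Euler-form-indefiniteness input reduces to combinatorics of paths on $\mC_n$; the indefiniteness computation, drawing on \cite{b,g}, is the one genuinely nontrivial step in the plan.
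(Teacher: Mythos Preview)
Your treatment of $r=2$ is correct and coincides with the paper's.

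For $r\ge 3$, however, there is a genuine gap. Your line-subalgebra $e\bA e\cong\bL(m,r)$ requires $m\le n-r+1$, while for $\bL(m,r)$ to carry any relation at all one needs $m\ge r+1$; so the argument can only get started when $n\ge 2r$. Consequently, for every $r\ge 3$ the pairs $(n,r)$ with $n<2r$---in particular all the local algebras $\bC(1,r)\cong\Mk[x]/(x^r)$---lie outside its reach. These are not ``finitely many small pairs'': there are infinitely many, and your proposal to treat them by letting a few wrap-around paths survive is too vague to count as a proof (for $\bC(1,r)$ there are no nontrivial idempotents at all, so no smaller full subalgebra exists and Lemma~\ref{subalg} is useless). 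Even setting the exceptional cases aside, the indefiniteness of the Euler form of $\bL(m,r)$ for large $m$ is asserted rather than verified.

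The paper bypasses all of this with a uniform two-line argument: every $\bC(n,r)$ is self-injective, and by Bautista \cite[Cor.~2.5]{ba} a self-injective algebra is either derived discrete or derived wild; Vossieck's classification \cite{v} then rules out derived discreteness for $r\ge 3$. The paper also remarks that an independent proof via cleaving functors appears in \cite[Prop.~3.1]{zh}.
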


\begin{proof} 
If $r=2$, $\bA$ is a gentle algebra, then is derived tame by Theorem  \ref{thm-gentle-tame}. Assume next that $r\geq 3$.
It was proved in \cite[Cor. 2.5]{ba} that if $\bA$ is self-injective then
$\bA$ is either derived discrete (see \cite{v}) or derived wild. 
Since the algebras $\bA=\bC(n,r)$ are all self-injective and not derived discrete by \cite{v}, the statement follows. 

\end{proof}

Note that Lemma \ref{selfinjective} was also proved in \cite[Prop. 3.1]{zh} by using cleaving functors.

\begin{lemma}\label{derwild2}
Let $\bA=\Mk \kQ/\kI$ be a  cycle algebra
satisfying the condition (C1) but not satisfying the condition (C2)
in Definition~\ref{condNakayama}. Then $\bA$ is  derived wild.
\end{lemma}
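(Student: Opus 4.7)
The hypothesis that $\bA$ satisfies (C1) but not (C2) produces an \emph{isolated} minimal $3$-relation $a_i a_{i+1} a_{i+2} \in R_{\bA}$, in the sense that both $a_{i-1} a_i a_{i+1}$ and $a_{i+1} a_{i+2} a_{i+3}$ lie outside $R_{\bA}$. Together with minimality and (C1), this forces the length-four paths $a_{i-1} a_i a_{i+1}$ and $a_{i+1} a_{i+2} a_{i+3}$ to be nonzero in $\bA$, so locally near the isolated relation the only vanishing length-three composition is $a_i a_{i+1} a_{i+2}$ itself.

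The strategy is to invoke Lemma~\ref{subalg} by producing a derived wild full subalgebra of $\bA$. The natural target is a truncated cycle $\bC(n',r')$ with $r' \geq 3$, which is derived wild by Lemma~\ref{selfinjective}. I proceed in two phases. First, iteratively apply the reduction $\bA \mapsto \bA(j)$ introduced in Section~\ref{s12}, choosing $j$ outside the window $\{i-1,i,i+1,i+2,i+3,i+4\}$; each such reduction merges two consecutive arrows into $g=a_{j-1}a_j$, shrinks the cycle, and keeps the isolated $3$-relation untouched both in its position and in its two immediate cyclic neighbours. Since $\bA(j) = (\sum_{\ell \neq j} e_\ell)\,\bA\,(\sum_{\ell \neq j} e_\ell)$ is a full subalgebra, Lemma~\ref{subalg} propagates derived wildness upward. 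Second, having sufficiently contracted the cycle, choose a further idempotent $e$ supported on two vertices such that \emph{both} oriented going-around compositions in the induced $2$-cycle $e\bA e$ pick up the isolated $3$-relation as a subpath, and therefore vanish at a common length $r'\geq 3$, identifying $e\bA e$ with $\bC(2,r')$.

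The main obstacle is executing the second phase. A naive choice of vertices typically produces a $2$-cycle in which only one of the two going-around compositions contains the isolated $3$-relation, giving an algebra like $\Mk\mC_2$ modulo a single length-$2$ or length-$3$ relation which is not of the form $\bC(2,r')$. Forcing simultaneous vanishing on both sides requires exploiting the fact that, after sufficient contraction, both arcs of the induced $2$-cycle traverse some rotation of the isolated $3$-relation; a careful interplay between the reduction pattern of Phase~1 and the choice of idempotent in Phase~2 is what makes this possible. Should the direct realization of $\bC(2,r')$ prove intractable in boundary cases, one may instead invoke the dichotomy theorem of \cite{bd} and construct a wild matrix problem directly from the local structure around the isolated $3$-relation. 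In either outcome, the derived wildness of the full subalgebra transfers to $\bA$ via Lemma~\ref{subalg}, completing the proof.
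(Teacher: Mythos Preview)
Your reduction Phase~1 is in the spirit of the paper's induction (its case~(b)), but both your base case and your opening paragraph contain real errors.

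First, the claim that $a_{i-1}a_ia_{i+1}$ and $a_{i+1}a_{i+2}a_{i+3}$ are \emph{nonzero} in $\bA$ does not follow. Failure of~(C2) only says these length-three paths are not in $R_{\bA}$; nothing prevents a length-two relation such as $a_{i-1}a_i\in R_{\bA}$ or $a_{i+2}a_{i+3}\in R_{\bA}$ from killing them. This matters, because your Phase~1 contraction $\bA\mapsto\bA(j)$ is only defined when $a_{j-1}a_j\ne 0$, and there need not be any such $j$ outside your six-vertex window.

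Second, and more seriously, Phase~2 cannot succeed. After contracting as far as possible you can land precisely on the algebra the paper isolates as its case~(a): $R_{\bA}=\{a_0a_1a_2\}\cup\{a_{i-1}a_i\mid i\notin\{1,2\}\}$. In this algebra every full subalgebra $e\bA e$ supported on a proper subset of vertices is a \emph{line} algebra (the arc not containing $\{0,1,2\}$ consists entirely of zero compositions), hence derived tame; in particular no $e\bA e$ is isomorphic to $\bC(n',r')$ with $r'\ge 3$. Your assertion that ``after sufficient contraction, both arcs of the induced $2$-cycle traverse some rotation of the isolated $3$-relation'' is simply false: a single $3$-relation sits on exactly one of the two arcs between any chosen pair of vertices. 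The fallback of ``constructing a wild matrix problem directly'' is not an argument.

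The paper resolves the base case~(a) by a genuinely different mechanism: it builds an explicit tilting complex $T_\bullet$ over $\bA$ whose endomorphism algebra is a radical-square-zero algebra with quiver neither Dynkin nor Euclidean, hence derived wild by \cite[Thm.~3.1]{bd}, and then transports wildness back via Theorem~\ref{dereq}. A derived equivalence, not a full-subalgebra embedding, is what is needed here; Lemma~\ref{subalg} alone is insufficient.
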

\begin{proof} Without loss of generality, we can assume that $a_0a_1a_2$ is an isolated $3$-relation.
Note that this means that $n>1$. 

We consider first a particular case, and then the general case.

\begin{itemize}
 \item[(a)] Assume first that $R_{\bA}=\{a_0a_1a_2, a_{i-1}a_i \,\,|\,\,
 i\in\kQ_{0}\setminus \{1,2\}\}$. Define a complex $T_\bp=\oplus_{i\in \kQ_{0}} T_i$ of $\bA$-modules as follows: 
 $$T_i:0\to A_i\to 0 \quad (\textnormal{in degree}\,\, 0) \quad\textnormal{for} \,\, i\in \kQ_0, i\neq 1,$$
  $$T_1:\xymatrix{ 0 \ar[r] & A_1 \ar[r]^{a_1} &  A_2 \ar[r] & 0}  \quad (\textnormal{in degrees}\,\, 1\,\,\textnormal{and}\,\, 0).$$ 
 It is easy to check that the complex $T_\bp$ is tilting  and  the
endomorphism algebra $\End_{\kD^{b}(\md{\bA})}(T_\bp)$ is isomorphic to  the algebra $\bB=\Mk \kQ_{\bB}/\kI_{\bB}$ which can be obtained from $\bA$ as follows:
the quiver $\kQ_{\bB}$ is obtained from $\kQ$ by replacing the subquiver 

\[
  \xymatrix{ 0 \ar[r]^{a_{0}} & 1 \ar[r]^{a_1} &  2 \ar[r]^{a_{2}} &  3 }
 \]
 
\noindent for the subquiver of the form

 \[ \xymatrix{ & 1 & \\
  0 \ar[r]^{b} & 2 \ar[r]^{a_2}\ar[u]^{c} & 3  
}
\]

\noindent and $\kI_{\bB}$ is the ideal of $\Mk \kQ_{\bB}$ generated by the set $R_{\bB}$ obtained 
from $R_{\bA}$  by replacing  the relation $a_{0}a_{1}a_{2}$
for the relations $a_{n-1}b$, $bc$ and $ba_2$, and keeping the remaining elements of $R_{\bA}$. Since $\bB$ is an algebra with radical square zero and $\kQ_{\bB}$ is neither of Dynkin nor of Euclidean type, it follows from \cite[Thm. 3.1]{bd} that $\bB$ is derived wild and hence $\bA$ is derived wild by Theorem \ref{dereq}.

\item[(b)] Because of (a) it remains to prove the result in the case that there exists $i\in \kQ_{0}\setminus \{1, 2\}$ such that $a_{i-1}a_i\notin R_{\bA}$ and $n>2$. 
Let  $\bB=\bA(i)$. Then $\bB$ is cyclic, for $a_{i-1}a_i\notin R_{\bA}$. 
Let 
$$
b =\begin{cases}
a_0,& \text{if } i\neq 0,\\
a_{n-1}a_0,& \text{if } i=0,\\
\end{cases} \;\;\;\;\;
c =\begin{cases}
a_2,& \text{if } i\neq 3,\\
a_{2}a_3,& \text{if } i=3,\\
\end{cases}$$

$$
d =\begin{cases}
a_{n-1},& \text{if } i\notin \{n-1, 0\},\\
a_{n-2},& \text{if } i=0,\\
a_{n-2}a_{n-1},& \text{if } i=n-1,\\
\end{cases} \;\;\;\;\;
f =\begin{cases}
a_{3},& \text{if } i\notin \{3, 4\},\\
a_{4},& \text{if } i=3,\\
a_{3}a_{4},& \text{if } i=4.\\
\end{cases}$$

It is easy to see that $b,c,d,f\in (\kQ_{\bB})_1$ in any of these cases.
We  prove that $ba_1c$ is an isolated $3$-relation in the algebra $\bB$. For this, we need to show that $ba_1c\in R_{\bB}$ and 
$dba_1, a_1cf\notin R_{\bB}$. 
\noindent
Since $a_0a_1a_2\in R_{\bA}$, then $ba_1c\in \kI_{\bB}$.
If $i\neq 0$, then $ba_1=a_0a_1$, and thus $ba_1\notin \kI_{\bB}$.
If $i=0$, then $ba_1=a_{n-1}a_0a_1$ and $a_{n-1}a_0\notin \kI_{\bA}$ and since $a_0a_1a_2$ is an isolated $3$-relation, $ba_1\notin \kI_{\bB}$.
In a similar way we can show that $a_1c\notin \kI_{\bB}$. 
Therefore, $ba_1c\in R_{\bB}$.
Suppose that $dba_1\in R_{\bB}$. If $i=0$, then $d=a_{n-2}$ and
$b=a_{n-1}a_{0}$. Since $dba_1\in R_{\bB}$, we have $a_{n-2}a_{n-1}a_0a_1\in \kI_{\bA}$ and $db=a_{n-2}a_{n-1}a_{0}, ba_1=a_{n-1}a_0a_1\notin \kI_{\bA}$, which implies $a_{n-2}a_{n-1}a_0a_1\in R_{\bA}$ and which contradicts the condition (C1). The case $i=n-1$ is analogous. If $i\notin \{n-1, 0\}$, then $d=a_{n-1}$ and $b=a_0$.
Since $dba_1\in R_{\bB}$, we have $a_{n-1}a_0a_1\in \kI_{\bA}$ and $a_{n-1}a_0=db, a_0a_1=ba_1\notin \kI_{\bA}$, and thus $a_{n-1}a_0a_1\in R_{\bA}$, which is a contradiction for $a_0a_1a_2$ is an isolated $3$-relation. Therefore $dba_1\notin R_{\bB}$.
In the similar way we can show that $a_1cf\notin R_{\bB}$.
Since $ba_1c\in R_{\bB}$ and 
$dba_1, a_1cf\notin R_{\bB}$, we have that $ba_1c$ is isolated $3$-relation in the algebra $\bB$.
The result now follows by induction on $n$ because of Lemma \ref{subalg} and if $n=2$, then we are clearly in  the case (a).
\end{itemize}
\end{proof}

 \begin{lemma}\label{derwild3}
Let $\bA=\Mk \kQ/\kI$ be a  cycle algebra
satisfying the the condition (C1) and not satisfying the condition (C3)
in Definition~\ref{condNakayama}.
Then $\bA$ is  derived wild.
\end{lemma}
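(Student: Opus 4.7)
I would mirror the two-step strategy from the proof of Lemma~\ref{derwild2}: handle a base case by a tilting complex whose endomorphism algebra is a derived wild radical-square-zero algebra, and reduce the general case to the base case via Lemma~\ref{subalg}. Since $\bA$ satisfies (C1) but violates (C3), there exist two consecutive minimal $3$-relations $a_ia_{i+1}a_{i+2},a_{i+1}a_{i+2}a_{i+3}\in R_\bA$ together with at least one further neighboring $3$-relation, $a_{i-1}a_ia_{i+1}\in R_\bA$ or $a_{i+2}a_{i+3}a_{i+4}\in R_\bA$. After a cyclic relabeling I may assume that $a_0a_1a_2,a_1a_2a_3,a_2a_3a_4\in R_\bA$, i.e.\ three consecutive minimal $3$-relations.

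For the base case, assume that these are the only length-$3$ relations and that the remaining elements of $R_\bA$ are length-$2$ relations serving only to close the cycle. Mimicking the construction of part (a) of Lemma~\ref{derwild2}, I would define a tilting complex $T_\bp=\bigoplus_{j\in\kQ_0}T_j$ with
$$T_2:\ \xymatrix{0\ar[r] & A_2\ar[r]^{a_2} & A_3\ar[r] & 0}\ \text{(in degrees }1\text{ and }0\text{)}$$
and $T_j=A_j$ concentrated in degree $0$ for $j\neq 2$. Verification of the tilting axioms and the standard computation of the endomorphism ring $\bB=\End_{\kD^b(\md\bA)}(T_\bp)\cong\Mk\kQ_\bB/\kI_\bB$ should produce a bound-quiver algebra in which the subquiver $1\xrightarrow{a_1}2\xrightarrow{a_2}3\xrightarrow{a_3}4$ of $\kQ$ is replaced by arrows $b=a_1a_2:1\to 3$, $c:3\to 2$ and the surviving $a_3:3\to 4$. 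The three inherited length-$3$ relations $a_0a_1a_2,a_1a_2a_3,a_2a_3a_4$, together with the tilting compatibility, then collapse into length-$2$ relations $a_0b,\ bc,\ ba_3,\ ca_3$, so $\bB$ has radical square zero. The resulting quiver is neither Dynkin nor Euclidean (the ``flap'' formed by $b$ and $c$ around vertices $2,3$ on top of a cycle puts it outside the extended Dynkin list), so $\bB$ is derived wild by \cite[Thm.~3.1]{bd}, and Theorem~\ref{dereq} transports derived wildness back to $\bA$.

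For the general case I would induct on $n$. If $n$ is larger than the base case, there exists a vertex $j\in\kQ_0\setminus\{0,1,2,3,4\}$ with $a_{j-1}a_j\notin R_\bA$; pass to the full subalgebra $\bA(j)$ in the spirit of Lemma~\ref{derwild2}(b). A routine case-by-case verification shows that $\bA(j)$ is again a cycle algebra on $n-1$ vertices still satisfying (C1), and the three $3$-relations $a_0a_1a_2,\,a_1a_2a_3,\,a_2a_3a_4$ (all disjoint from $j$) persist as minimal $3$-relations in $\bA(j)$, so $\bA(j)$ still violates (C3). By the induction hypothesis $\bA(j)$ is derived wild, and Lemma~\ref{subalg} yields the same for $\bA$. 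The degenerate situations where no such $j$ is available coincide with the truncated cycles $\bC(n,3)$ for small $n$, handled directly by Lemma~\ref{selfinjective}.

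The main obstacle is the base-case tilting computation: one must verify rigorously that $T_\bp$ satisfies the tilting axioms and that the endomorphism ring really has the announced radical-square-zero presentation, and then confirm that its quiver lies outside the Dynkin/extended Dynkin list regardless of which length-$2$ relations of $\bA$ are adjacent to the three-relation block. Because the interaction between the inherited length-$3$ relations and the tilt is delicate at the boundary vertices $0$ and $4$, some auxiliary subcases may need to be treated either by a second tilt or by a preliminary application of Lemma~\ref{subalg} to separate the block from its surroundings.
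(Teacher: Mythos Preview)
Your overall plan is reasonable in spirit, but the paper takes a shorter route and your base-case tilting computation contains a genuine error.

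The paper does not introduce a new tilting complex at all. After disposing of the truncated case $\bA=\bC(n,3)$ via Lemma~\ref{selfinjective}, one may assume $a_1a_2a_3,a_2a_3a_4,a_3a_4a_5\in R_\bA$ while $a_0a_1a_2\notin R_\bA$. Passing to the single idempotent subalgebra $\bB=\bA(5)$ collapses $a_3a_4a_5$ to a $2$-relation $a_3g$; a short case analysis (for $n=4$, $n=5$, and $n>5$) then shows that the surviving relation $ba_2a_3$ is \emph{isolated} in $\bB$: its right neighbour is killed by $a_3g=0$ and its left neighbour was absent by hypothesis. Lemma~\ref{derwild2} therefore applies directly to $\bB$, and Lemma~\ref{subalg} transports derived wildness back to $\bA$. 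No induction on $n$ is needed.

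Your base-case computation does not yield a radical-square-zero algebra. With $T_2=(A_2\xrightarrow{a_2}A_3)$ and $T_j=A_j$ otherwise, the arrows of $\End_{\kD^b(\md\bA)}(T_\bp)$ near the tilted region are indeed $b=a_1a_2:1\to 3$, $c:3\to 2$, and $a_3:3\to 4$, and the relations $a_0b=bc=ba_3=0$ do hold. But the ``relation'' $ca_3$ you list is not even a composable path (both $c$ and $a_3$ start at $3$), and more seriously the composition $T_3\xrightarrow{a_3}T_4\xrightarrow{a_4}T_5$ equals $a_3a_4\in\Hom_\bA(A_3,A_5)$, which is nonzero because only $a_2a_3a_4$ vanishes, not $a_3a_4$. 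Hence $\rad^2\bB\neq 0$ and \cite[Thm.~3.1]{bd} is inapplicable. The analogy with Lemma~\ref{derwild2}(a) breaks precisely because there the single $3$-relation is flanked on both sides by $2$-relations, whereas here the right edge of your three-relation block still carries the nonzero product $a_3a_4$. One can repair this by an additional reduction on that side---which is exactly what the paper accomplishes in one step by deleting vertex~$5$ and then invoking Lemma~\ref{derwild2}.
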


\begin{proof} 
If $a_{i-1}a_{i}a_{i+1}\in R_{\bA}$ for all $i\in \kQ_0$, then
$\bA=\bC(n,3)$ is a cycle truncated algebra, where $n$ is the number of vertices of $\kQ$.
In particular, this happens provided that $n<4$. Hence $\bA$ is 
derived wild in these cases by Lemma~\ref{selfinjective}. Therefore we can assume
without loss of generality that $n\geq 4$, $a_1a_2a_3$, $a_2a_3a_4$, $a_3a_4a_5\in R_{\bA}$ and
that $a_0a_1a_2\notin R_{\bA}$. Let $\bB=\bA(5)$ and 
$$
b =\begin{cases}
a_0a_1,& \text{if } n=4,\\
a_1,& \text{if } n>4,\\
\end{cases}$$

$$
c =\begin{cases}
a_0a_1,& \text{if } n=4,\\
a_4a_0,& \text{if } n=5,\\
a_4a_5,& \text{if } n>5,\\
\end{cases} \;\;\;\;\;
d =\begin{cases}
a_3,& \text{if } n=4,\\
a_{4}a_0,& \text{if } n=5,\\
a_0,& \text{if } n>5.\\
\end{cases}$$

\noindent It is easy to see that $b,c,d,f\in (\kQ_{\bB})_1$ in any of these cases. 
We show next that $ba_2a_3$ is an isolated $3$-relation in the algebra $\bB$, i.e., $ba_2a_3\in R_{\bB}$ and 
$dba_2, a_2a_3c\notin R_{\bB}$. To do this, we consider the cases $n=4,5$ and $n>5$.

\emph{Case $n=4$:} Since $a_1a_2a_3\in R_{\bA}$, it follows that $ba_2a_3\in \kI_{\bB}$ and $a_1a_2, a_2a_3\notin \kI_{\bA}$. On the other hand, since $a_0a_1a_2\notin R_{\bA}$ and $a_0a_1, a_1a_2\not\in I_{\bA}$, it follows that $a_0a_1a_2\notin \kI_{\bA}$ and thus $ba_2\notin \kI_{\bB}$. Since $ba_2a_3\in \kI_{\bB}$ and $ba_2, a_2a_3\notin \kI_{\bB}$, we obtain that $ba_2a_3\in R_{\bB}$, and since $a_3g \in R_{\bB}$, it follows that $dba_2, a_2a_3c\notin R_{\bB}$.

\emph{Case $n=5$:} Since $a_1a_2a_3\in R_{\bA}$ and $a_1,a_2,a_3\in (\kQ_{\bB})_1$, we have that $a_1a_2a_3\in R_{\bB}$. Suppose that $dba_2\in R_{\bB}$. Then $a_4a_0a_1a_2\in I_{\bA}$ and
$a_4a_0a_1\notin I_{\bA}$. If $a_0a_1a_2\notin \kI_{\bA}$, then $a_4a_0a_1a_2\in R_{\bA}$,  which contradicts the condition (C1). Therefore $a_0a_1\in \kI_{\bA}$ for $a_0a_1a_2\notin R_{\bA}$, which is a 
contradiction with $db\notin R_{\bB}$. Hence $dba_2\notin R_{\bB}$. Since $a_3a_4a_0\in R_{\bA}$, it follows that $a_3g\in R_{\bB}$ and thus $a_2a_3c\notin R_{\bB}$.

\emph{Case $n>5$:} Since $a_1a_2a_3\in R_{\bA}$ and $a_1,a_2,a_3\in (\kQ_{\bB})_1$, we obtain that $a_1a_2a_3\in R_{\bB}$, and since $a_0a_1a_2\notin R_{\bA}$ and $a_0,a_1,a_2\in (\kQ_{\bB})_1$, it follows that $dba_2\notin R_{\bB}$. On the other hand, since $a_3g\in R_{\bB}$, we have that $a_2a_3c=a_2a_3g\notin R_{\bB}$. 

Since in all these cases $ba_2a_3\in R_{\bB}$ and 
$dba_2, a_2a_3c\notin R_{\bB}$,  we obtain that $ba_2a_3$ is an 
isolated $3$-relation in the algebra $\bB$, and thus
$\bB$ is derived wild by Lemma~\ref{derwild2}. 
Therefore $\bA$ is derived wild by Lemma~\ref{subalg}.
\end{proof}

\begin{lemma}\label{derwild1}
Let $\bA=\Mk \kQ/\kI$ be a derived tame cycle algebra. 
Then the ideal $\kI$ can be generated by relations of length two
or three.
\end{lemma}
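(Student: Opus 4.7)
The plan is to prove the contrapositive: if $\bA$ admits a minimal relation of length $m\geq 4$, then $\bA$ is derived wild. I argue by induction on $m$, using the subalgebra contractions $\bA(i)$ defined in Section~\ref{s12} together with the derived-wildness results already established in Lemmas~\ref{subalg}, \ref{selfinjective}, \ref{derwild2}, and \ref{derwild3}.

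For the inductive step ($m\geq 5$), let $p=a_0 a_1\cdots a_{m-1}$ be a minimal $m$-relation. Form $\bB=\bA(1)$, so that $a_0,a_1$ are merged into a single arrow $g=a_0 a_1$ and $p$ becomes the path $g a_2\cdots a_{m-1}$ of length $m-1$ in $\bB$. Minimality of $p$ forces $a_0 a_1\cdots a_{m-2}\notin\kI$ and $a_1\cdots a_{m-1}\notin\kI$, hence the shortened path is itself a minimal $(m-1)$-relation in $\bB$. The inductive hypothesis gives that $\bB$ is derived wild, and Lemma~\ref{subalg} transfers derived wildness back to $\bA$.

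For the base case $m=4$, let $a_0 a_1 a_2 a_3$ be minimal and form $\bB=\bA(1)$, producing the minimal $3$-relation $g a_2 a_3$ in $\bB$. The natural dichotomy is whether this $3$-relation is isolated in $\bB$. If it is, Lemma~\ref{derwild2} gives that $\bB$ is derived wild and Lemma~\ref{subalg} finishes. Otherwise, the failure of isolation must come from either (a) a pre-existing $3$-relation $a_2 a_3 a_4\in R_{\bA}$, or (b) another length-$4$ relation $a_{n-1} a_0 a_1 a_2\in R_{\bA}$ which contracts to the neighboring $3$-relation $a_{n-1} g a_2\in R_{\bB}$. In case (a), one contracts vertex $2$ instead, obtaining two consecutive $3$-relations $a_0 h a_3$ and $h a_3 a_4$ (with $h=a_1 a_2$) in $\bA(2)$, and either invokes Lemma~\ref{derwild3} when (C3) is violated or iterates the contraction; if the overlap propagates around the whole cycle, $\bA$ collapses (after suitable contractions) to a truncated cycle $\bC(n',r)$ with $r\geq 3$, derived wild by Lemma~\ref{selfinjective}. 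Case (b) is handled analogously by pivoting the contraction to vertex $0$ (yielding $g'=a_{n-1}a_0$ and the new $3$-relation $g' a_1 a_2$ in $\bA(0)$) and repeating the dichotomy.

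The principal obstacle is the base case $m=4$: for every overlap pattern around the minimal $4$-relation, one must verify that some iterated choice of contraction lands in a configuration to which Lemma~\ref{selfinjective}, \ref{derwild2}, or \ref{derwild3} applies. The inductive step above is, by comparison, a routine length reduction.
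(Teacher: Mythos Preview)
Your overall strategy---contract at a well-chosen vertex and then invoke Lemma~\ref{derwild2} or~\ref{derwild3}---is exactly the paper's. The inductive step for $m\geq 5$ is sound (apart from the small-$n$ cases $n\leq 3$, which must be treated separately; the paper dispatches them via~\cite{bdf}).

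The genuine gap is in the base case $m=4$. To apply Lemma~\ref{derwild2} or Lemma~\ref{derwild3} to the contraction $\bB$, you need $\bB$ to satisfy condition~(C1): its ideal must be generated by paths of length at most three. But your hypothesis in the base case says only that $\bA$ has \emph{some} minimal $4$-relation; nothing prevents $\bA$ from carrying further minimal relations of length $\geq 4$ elsewhere on the cycle, and any such relation that misses the contracted vertex survives unchanged in $\bB$. Then (C1) fails in $\bB$ and neither lemma is available. Your ``iterate the contraction'' escape hatch amounts to an implicit secondary induction on the number of vertices, which you never set up.

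The paper avoids this by organising the whole argument as a minimal counterexample in $n=|\kQ_0|$ rather than an induction on $m$. Minimality of $n$ immediately forces $m=4$ (else $\bA(1)$ is a smaller counterexample) and, crucially, forces $a_{i-1}a_i\in\kI$ for every $i\notin\{0,1,2,3,4\}$ (else $\bA(i)$ still contains the same $4$-relation and is smaller). After this reduction $R_{\bA}^{\geq 3}$ lives in a fixed window around $\rho$, every relevant contraction automatically satisfies~(C1), and the remaining case analysis (the paper's cases (a)--(d)) is genuinely finite. Your sketch can be repaired, but once you add the missing induction on $n$ it becomes the paper's proof.
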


\begin{proof} Suppose that there exists a cycle derived tame algebra
$\bA=\Mk \kQ/\kI$ with some minimal relation 
$\rho=a_0a_1\cdots a_{m-1} \in R_{\bA}$ of length
$l(\rho)=m\geq 4$. We can assume that such $\bA$ has a minimal number $n$ of vertices.

\emph{Case $n\leq 2$:} Since there exists $\rho \in R_{\bA}$ with $l(\rho)\geq 3$, it follows 
by \cite{bdf} that $\bA$ is derived wild, which contradicts the derived tameness of $\bA$. 

\emph{Case $n=3$:} Let $\bB=\bA(1)$. Since $l(\rho)=m\geq 4$, we have a minimal
relation  of length greater that two in $R_{\bB}$. Hence $\bB$ is derived wild by
\cite{bdf} and therefore $\bA$ is derived wild by Lemma~\ref{subalg}, which is again a contradiction.

\emph{Case $m> 4$:} Because of the previous cases we can assume that $n>3$.
Let $\bB=\bA(1)$. Since $l(\rho)=m> 4$, we have a minimal
relation  of length greater that three in $R_{\bB}$. 
By \cite{bdf} if follows that $\bB$ is derived tame, which is a contradiction with the minimality of $n$.

From now we assume that $\rho=a_0a_1a_2a_3$, i.e., $m=4$ and that
 $R_{\bA}=R_{\bA}^{\leq 4}$.

\emph{Case $n\geq 4$:} Suppose that $a_{i-1}a_i\notin \kI$ for some
$i\in \kQ_0\setminus \{0, 1, \cdots ,4\}$. Let $\bB=\bA(i)$.
Since $\rho\in R_\bB$ and since $\bB$ is derived tame by Lemma~\ref{subalg},
we obtain a contradiction with the minimality of $n$. Hence we can assume
that $a_{i-1}a_i\in \kI$ for all $i\in \kQ_0\setminus \{0, 1, \cdots ,4\}$. Let $a_{n-1}a_0\notin \kI$ and $\bB=\bA(n-1)$. It follows from the minimality of $n$
that $ga_1a_2a_3\notin R_{\bB}$. Hence either $a_{n-1}a_0a_1\in R_{\bA}$ or 
$a_{n-1}a_0a_1a_2\in R_{\bA}$. Similarly, if  $a_3a_4\notin \kI$ then either 
$a_2a_3a_4\in R_{\bA}$ or $a_1a_2a_3a_4\in R_{\bA}$. Note that $a_{n-1}a_0=a_3a_4=a_3a_0$ for when
$n=4$. We next consider all the possibilities. 
\begin{itemize}
\item [(a)] If $a_{n-1}a_0$, $a_3a_4\in \kI$, then
$R_{\bA}^{\geq 3}=\{a_0a_1a_2a_3\}$. If $\bB=\bA(2)$, then $R_{\bB}^{\geq 3}=\{a_0ga_3 \}$
and hence $a_0ga_3$ is an isolated $3$-relation in $\bB$. 
\item[(b)] If $a_{n-1}a_0\in \kI$ and $ a_3a_4\notin \kI$, then either 
$R_{\bA}^{\geq 3}=\{a_0a_1a_2a_3, a_1a_2a_3a_4\}$ or
$R_{\bA}^{\geq 3}=\{a_0a_1a_2a_3, a_2a_3a_4\}$. Let $\bB=\bA(1)$ (resp., $\bB=\bA(3)$)  in the first case (resp., second case). Then $R_{\bB}^{\geq 3}=\{ga_2a_3 \}$ (resp., $R_{\bB}^{\geq 3}=\{a_0a_1g\}$)
and hence $ga_2a_3$  (resp., $a_0a_1g$) is an isolated $3$-relation in $\bB$.
\item[(c)] If $a_{n-1}a_0\notin \kI$ and $ a_3a_4\in \kI$, then we can argue as in the situation (b).

\item[(d)] Assume that $a_{n-1}a_0$, $a_3a_4\notin \kI$. We have the following
cases: 
\begin{itemize}
\item[(d.i)] $R_{\bA}^{\geq 3}\supseteq\{a_{n-1}a_0a_1a_2, a_0a_1a_2a_3,a_1a_2a_3a_4\};$
\item[(d.ii)] $R_{\bA}^{\geq 3}\supseteq\{a_{n-1}a_0a_1, a_0a_1a_2a_3, a_1a_2a_3a_4\};$
\item[(d.iii)] $R_{\bA}^{\geq 3}\supseteq\{a_{n-1}a_0a_1a_2, a_0a_1a_2a_3, a_2a_3a_4\}$;
\item[(d.iv)] $R_{\bA}^{\geq 3}\supseteq\{a_{n-1}a_0a_1, a_0a_1a_2a_3, a_2a_3a_4\}$.
\end{itemize}
In all the cases (d.i)-(d.iv), let $\bB=\bA(2)$. It follows that $R_{\bB}^{\geq 3}\supseteq
\{a_{n-1}a_0g, a_0ga_3, ga_3a_4 \}$ and hence we have three consecutive minimal
$3$-relations in $\bB$. Note that in all of these cases $R_{\bB}\subseteq R_{\bB}^{\leq 3}$,
i.e., $\bB$ satisfies the condition (C1) in Definition~\ref{condNakayama}.
\end{itemize}

In the situations (a)-(c), $\bB$ is derived wild by Lemma~\ref{derwild2}, whereas in
the situation (d), $\bB$ is derived wild by Lemma~\ref{derwild3}.
Hence $\bA$ is derived wild by Lemma~\ref{subalg}, 
which is again a contradiction. 
\end{proof}

\subsection{The class $\kD$}\label{s22}

Let $\bA=\Mk \kQ_{\bA}/\kI_{\bA}$ be an algebra
which belongs to class $\kD$. 
We set 
$\Omega=\{i\in (\kQ_{\bA})_0\, |\, a_{i-2}a_{i-1}a_{i}, 
a_{i-1}a_{i}a_{i+1} \in R_{\bA}\}$ and
$Sp=\{i\in (\kQ_{\bA})_0\, |\, i-1\in \Omega\}$.
Let $\bA^{\omega}$ be the full subalgebra $e\bA e$ of $\bA$, where
$e=\sum_{i\in (\kQ_{\bA})_0\setminus \Omega}e_i$.
Then we can assume that $\bA^{\omega}=\Mk \kQ_{\bA^{\omega}}/\kI_{\bA^{\omega}}$, 
where the quiver
$\kQ_{\bA^{\omega}}$ is obtained from $\kQ_{\bA}$ by replacing for each $i\in \Omega$
the subquiver $\xymatrix{ i-1 \ar[r]^{a_{i-1}} & i \ar[r]^{a_i} &  i+1} $
 by the quiver of the form $\xymatrix{ i-1  \ar[r]^{b_{i-1}} &  i+1} $,
  and $\kI_{\bA^{\omega}}$ is the ideal of $\Mk \kQ_{\bA^{\omega}}$ 
generated by the set $R_{\bA^{\omega}}$ obtained 
from $R_{\bA}$  by replacing for each $i\in \Omega$  the pair of 
relations $a_{i-2}a_{i-1}a_{i}, a_{i-1}a_{i}a_{i+1}$
for $a_{i-2}b_{i-1}, b_{i-1}a_{i+1}$.
It is easy to check that the algebra $\bA^{\omega}$ is gentle
(see Subsection~\ref{s13}) and thus $(\kQ_{\bA^{\omega}}, Sp, R_{\bA^{\omega}})$  is 
 a skewed-gentle triple 
(see Subsection~\ref{s14}). Then we denote by $\bA^{\Omega}$ the corresponding 
skewed-gentle algebra 
$\Mk (\kQ_{\bA^{\omega}})^{sg}/\left<(R_{\bA^{\omega}})^{sg}\right>$.

\begin{exam}\label{exam1}  Let $\bA=\Mk \kQ_{\bA}/\kI_{\bB}$ be 
the algebra such that 
\[
  \kQ_{\bA}=\mC_3: \xymatrix{ 0 \ar[rr]^{a_0} & &1 \ar[rr]^{a_1} & & 2    \ar@/^1pc/[llll]^{a_2} },
 \]
 
 \medskip\noindent and $\kI_{\bA}$ is the ideal of $\Mk \kQ_{\bA}$ generated 
by the set $R_{\bA}=\{a_0a_1a_2,$ $ a_1a_2a_0\}$. Then $\Omega=\{2\}$ and $\bA^{\omega}=\Mk \kQ_{\bA^{\omega}}/\kI_{\bA^{\omega}}$
 is the algebra such that

 \[
  \kQ_{\bA^{\omega}}: \xymatrix{
0 \ar@/^1pc/[rr]^{a_0} && 1 \ar@/^1pc/[ll]^{b_1}   },
 \]
 
 \medskip\noindent and $\kI_{\bA^{\omega}}$ is the ideal of $\Mk \kQ_{\bA^{\omega}}$ 
generated by the set $R_{\bA^{\omega}}=\{a_0b_1, b_1a_0\}$. Then  $(\kQ_{\bA^{\omega}}, Sp, R_{\bA^{\omega}})$, is  a skewed-gentle triple with $Sp=\{0\}$, and thus
 $\bA^{\Omega}=\Mk \kQ_{\bA^{\Omega}}/\kI_{\bA^{\Omega}}$ is the algebra such that

  \[
 \kQ_{\bA^{\Omega}}:\xymatrix{
  &  &0^{+} \ar[rd]^{a_0^{+}}   &\\
 & 1 \ar@{--}[d] \ar@{--}[u]\ar[rd]_{b_{1}^{-}} \ar[ur]^{b_1^{+}} & & 1\ar@{--}[d] \ar@{--}[u] \\
   & & 0^{-}\ar[ur]_{a_0^{-}}    & }
\]

 \medskip\noindent where the dotted lines are identified, and $\kI_{\bA^{\Omega}}$ is the ideal of $\Mk \kQ_{\bA^{\Omega}}$ generated by the set  
 $R_{\bA^{\Omega}}=\{b_1^{+}a_0^{+}-b_1^{-}a_0^{-}, a_0^{\pm}b_1^{\pm}\}$. 
 
\end{exam}

\begin{exam}\label{exam2}  Let $\bA=\Mk \kQ_{\bA}/\kI_{\bB}$ be the algebra such that 
\[
  \kQ_{\bA}=\mC_4: \xymatrix{ 0 \ar[rr]^{a_0} && 1 \ar[rr]^{a_1} &&  2 \ar[rr]^{a_2} &&  3   \ar@/^1pc/[llllll]^{a_3} },
 \]
 
 \medskip\noindent and $\kI_{\bA}$ is the ideal of $\Mk \kQ_{\bA}$ generated by 
the set $R_{\bA}=\{a_0a_1a_2,$ $ a_1a_2a_3,$ $ a_3a_0\}$. Then $\Omega=\{2\}$ and $\bA^{\omega}=\Mk \kQ_{\bA^{\omega}}/\kI_{\bA^{\omega}}$ 
is the algebra such that

 \[
  \kQ_{\bA^{\omega}}: \xymatrix{ 0 \ar[rr]^{a_0} &&   1 \ar[rr]^{b_1} &&  3   \ar@/^1pc/[llll]^{a_{3}} },
 \]
 
 \medskip\noindent and $\kI_{\bA^{\omega}}$ is
 the ideal of $\Mk \kQ_{\bA^{\omega}}$ generated by 
the set $R_{\bA^{\omega}}=\{a_0b_1,  b_1a_3, a_3a_0\}$. Then  $(\kQ_{\bA^{\omega}} , Sp, R_{\bA^{\omega}})$ is  a skewed-gentle triple with $Sp=\{3\}$, and thus 
 $\bA^{\Omega}=\Mk \kQ_{\bA^{\Omega}}/\kI_{\bA^{\Omega}}$ is the algebra such that
  
  \[
 \kQ_{\bA^{\Omega}}:\xymatrix{
  &  &3^{+} \ar[rd]^{a_3^{+}}  &  &\\
 & 1 \ar@{--}[d] \ar@{--}[u]\ar[rd]_{b_{1}^{-}} \ar[ur]^{b_1^{+}} & & 0  \ar[r]^{a_0}
& 1\ar@{--}[d] \ar@{--}[u] \\
   & & 3^{-}\ar[ur]_{a_3^{-}}   & & }
\]

 \medskip\noindent where the dotted lines are identified and $\kI_{\bA^{\Omega}}$ 
is the ideal of $\Mk \kQ_{\bA^{\Omega}}$ generated by the set  
 $R_{\bA^{\Omega}}=\{b_1^{+}a_3^{+}-b_1^{-}a_3^{-}, a_3^{-}a_0,
  a_3^{+}a_0, a_0b_1^{-}, a_0b_1^{+}\}$. 
 
\end{exam}

\begin{exam}\label{exam3} Let $\bA=\Mk \kQ_{\bA}/\kI_{\bB}$ be the algebra such that 
\[
  \kQ_{\bA}=\mC_6: \xymatrix{ 0 \ar[r]^{a_0} & 1 \ar[r]^{a_1} &  2 \ar[r]^{a_2} &  3 \ar[r]^{a_3} &  4 \ar[r]^{a_4} &  5  \ar@/^1pc/[lllll]^{a_{5}} },
 \]
 
 \medskip\noindent and $\kI_{\bA}$ is the ideal of $\Mk \kQ_{\bA}$ 
generated by the set $R_{\bA}=\{a_0a_1a_2,$ $ a_1a_2a_3,$ $ a_3a_4a_5, a_4a_5a_0\}$. 
 Then $\Omega=\{2,5\}$ and $\bA^{\omega}=\Mk \kQ_{\bA^{\omega}}/\kI_{\bA^{\omega}}$ 
is the algebra such that

 \[
  \kQ_{\bA^{\omega}}: \xymatrix{ 0 \ar[r]^{a_0} &   1 \ar[r]^{b_1} &  3 \ar[r]^{a_3} &    4  \ar@/^1pc/[lll]^{b_{4}} },
 \]
 
 \medskip\noindent and $\kI_{\bA^{\omega}}$ is the ideal of $\Mk \kQ_{\bA^{\omega}}$ generated by the set $R_{\bA^{\omega}}=\{a_0b_1,$ $ b_1a_3,$ $a_3b_4,$ $b_4a_0\}$.
 
 Then  $(\kQ_{\bA^{\omega}}, Sp, R_{\bA^{\omega}})$ is  a skewed-gentle triple with $Sp=\{0,3\}$, and thus
 $\bA^{\Omega}=\Mk \kQ_{\bA^{\Omega}}/\kI_{\bA^{\Omega}}$ is the algebra such that
  
  \[
 \kQ_{\bA^{\Omega}}:\xymatrix{
  &  &3^{+} \ar[rd]^{a_3^{+}} & & 0^{+} \ar[rd]^{a_0^{+}} &\\
 & 1 \ar@{--}[d] \ar@{--}[u]\ar[rd]_{b_{1}^{-}} \ar[ur]^{b_1^{+}} & & 4  \ar[rd]_{b_4^{-}} \ar[ur]^{b_4^{+}}& 
& 1\ar@{--}[d] \ar@{--}[u] \\
   & & 3^{-}\ar[ur]_{a_3^{-}} &  & 0^{-} \ar[ur]_{a_0^{-}}& }
\]

 \medskip\noindent where the dotted lines are identified and $\kI_{\bA^{\Omega}}$ 
is the ideal of $\Mk \kQ_{\bA^{\Omega}}$ generated by the set  
 $R_{\bA^{\Omega}}=\{b_1^{+}a_3^{+}-b_1^{-}a_3^{-}, b_4^{+}a_0^{+}-b_4^{-}a_0^{-}, a_3^{\pm}b_4^{\pm},
  a_0^{\pm}b_1^{\pm}\}$. 
\end{exam}

\begin{prop}\label{sgentle}  Let $\bA$ be an algebra
which belongs to the class $\kD$ and which is not gentle.
Then $\bA$ is  derived equivalent to the skewed-gentle algebra $\bA^{\Omega}$.
\end{prop}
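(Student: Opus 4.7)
The plan is to exhibit a tilting complex $T_{\bp}\in\kK^{b}(\pr{\bA})$ with $\End_{\kD^{b}(\md{\bA})}(T_{\bp})\cong\bA^{\Omega}$; by Rickard's theorem (see Subsection~\ref{s15}) this produces the desired derived equivalence. Since $Sp=\{i+1\mid i\in\Omega\}$ is in bijection with $\Omega$ via $j\mapsto j-1$, we have $|\Omega|=|Sp|$, so $\bA$ and $\bA^{\Omega}$ have the same number $|(\kQ_{\bA})_{0}|$ of isoclasses of simple modules, and $T_{\bp}$ is to have this many indecomposable summands.

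The construction is local around each $i\in\Omega$. For a vertex $j\in(\kQ_{\bA})_{0}\setminus\Omega$, the corresponding summand of $T_{\bp}$ is the indecomposable projective $A_{j}$ in degree $0$. For each $i\in\Omega$, the summand $T_{i}$ is a minimal two-term complex of adjacent projectives whose differential is given by multiplication by a subpath of one of the two length-three relations $a_{i-2}a_{i-1}a_{i}$ and $a_{i-1}a_{i}a_{i+1}$ meeting at $i$. The precise shape of $T_{i}$ is chosen so that this pair of relations in $R_{\bA}$ gets translated, inside $\End(T_{\bp})$, into a single commutative relation of the form $b_{i-1}^{+}a_{i+1}^{+}-b_{i-1}^{-}a_{i+1}^{-}=0$ at the split vertex $i+1\in Sp$.

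The plan is then to verify the three defining conditions of a tilting complex. First, $\Hom_{\kD^{b}(\md{\bA})}(T_{\bp},T_{\bp}[k])=0$ for $k\neq 0$; this reduces to a direct computation using the minimality of each $T_{i}$ together with the fact, guaranteed by (C1) and Lemma~\ref{derwild1}, that every path of length $\geq 4$ in $\bA$ lies in $\kI$. Second, $\add(T_{\bp})$ must generate $\kK^{b}(\pr{\bA})$ as a triangulated category, which is obtained by recovering each omitted projective $A_{i}$, $i\in\Omega$, as the mapping cone of an explicit chain map between summands of $T_{\bp}$. Third, one identifies $\End(T_{\bp})$ with $\bA^{\Omega}$ by matching generators and relations.

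The principal obstacle is this third step. One must enumerate all chain maps modulo homotopy between summands of $T_{\bp}$ and verify: (a) the quiver of $\End(T_{\bp})$ matches $\kQ_{\bA^{\Omega}}$, including the paired arrows $a_{i+1}^{\pm}$ and $b_{i-1}^{\pm}$ at each split vertex $i+1\in Sp$; (b) the zero relations of $\bA^{\Omega}$ arise from compositions of chain maps that form substrings of the length-three relations in $R_{\bA}$ and so vanish in $\End(T_{\bp})$; and (c) each commutative relation $b_{i-1}^{+}a_{i+1}^{+}-b_{i-1}^{-}a_{i+1}^{-}$ arises as the equality of two different chain-map representations (one through each sign) of a single nontrivial morphism of the neighboring summand of $T_{\bp}$. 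Once these identifications are verified, using Conditions (C1)--(C3) to exclude any extraneous relations, the isomorphism $\End(T_{\bp})\cong\bA^{\Omega}$, and hence the derived equivalence, follows.
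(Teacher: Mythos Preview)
Your strategy coincides with the paper's: exhibit a two-term tilting complex built from stalks and from complexes of adjacent projectives, then identify its endomorphism ring with $\bA^{\Omega}$. The paper's proof is literally the specification of this complex followed by ``it is easy to check.''

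However, the complex you describe does not work as stated. You take $T_j=A_j[0]$ for $j\notin\Omega$ and a two-term $T_i$ for $i\in\Omega$; the paper instead takes $T_i=A_i[0]$ when $i+1\notin\Omega$ and $T_i=(A_i\xrightarrow{a_i}A_{i+1})$ when $i+1\in\Omega$. This shift by one matters. With your placement of stalks, neither natural choice of two-term summand is Hom-orthogonal to them: in Example~\ref{exam1} (where $\Omega=\{2\}$), taking $T_2=(A_1\xrightarrow{a_1}A_2)$ produces a non-null-homotopic map $T_2\to A_1[1]$ (the identity on $A_1$ in degree~$1$; any homotopy would have to split the radical map $a_1$), while taking $T_2=(A_2\xrightarrow{a_2}A_0)$ produces a non-null-homotopic map $A_0[0]\to T_2[-1]$ (namely $a_0a_1\colon A_0\to A_2$ in degree~$0$, which is a chain map precisely because $a_0a_1a_2\in R_{\bA}$). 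In the paper's indexing the analogous obstruction vanishes because the relevant composite, $a_2a_0a_1$, is \emph{not} in $\kI$; this is exactly where the asymmetry built into (C2)--(C3) is used. So your two-term summands should be indexed by $\Omega-1$, not by $\Omega$.

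Separately, your justification for Hom-vanishing contains an error: neither (C1) nor Lemma~\ref{derwild1} says that every path of length $\geq 4$ lies in $\kI$; they only say that $\kI$ is \emph{generated} by paths of length at most~$3$. For instance, $\mC_{10}$ with $R_{\bA}=\{a_0a_1a_2,\ a_1a_2a_3,\ a_5a_6\}$ lies in $\kD$ and is not gentle, yet $a_6a_7a_8a_9\notin\kI$. The orthogonality check therefore cannot rest on any global bound on path length; it has to use the local configuration of relations around each $i\in\Omega$ guaranteed by (C2) and (C3).
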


\begin{proof} 
 
 Define a complex $T_\bp=\oplus_{i\in \kQ_{0}} T_i$ of $\bA$-modules as follows. 
 Let $T_i:0\to A_i\to 0$ (in degree $0$) for  $i+1\in \kQ_0 \setminus \Omega$
 and $T_i:0\to A_i\to A_{i+1}\to 0$ (in degrees $1$ and $0$) for  $i+1\in \Omega$. 
 It is easy to check that the complex $T_\bp$ is tilting  and the
endomorphism algebra $\End_{\kD^{b}(\md{\bA})}(T_\bp)$ is isomorphic to  $\bA^{\Omega}$.

\end{proof}

\subsection{Proof of Theorem~\ref{t1}}\label{s23}

\begin{proof} If $\bA$ is a line algebra then in this situation Theorem~\ref{t1}  follows
from  \cite[Thm 1.1]{b} (see also \cite{g}). Thus we can assume that $\bA$
is a cycle algebra.
The implication $(\Rightarrow)$ follows from Lemma~\ref{derwild1}, Lemma~\ref{derwild2} and Lemma~\ref{derwild3}, whereas the implication $(\Leftarrow)$  follows from Proposition~\ref{sgentle}, Theorem~\ref{thm-skewed-gentle-tame}, Theorem~\ref{dereq} and Theorem~\ref{thm-gentle-tame}.

\end{proof}

\subsection{Proof of Theorem~\ref{t2}}\label{s24}

\begin{proof}
$(\Rightarrow)$. Since $\bA$ is a derived tame cycle algebra, it follows from Theorem~\ref{t1} that $\bA$ belongs to the class $\kD$. If $\kI$ is generated by relations of length two then $\bA$ is gentle, and for otherwise the statement follows from  Proposition~\ref{sgentle}.
$(\Leftarrow)$. If $\bA$ is gentle, then the statement follows from Theorem~\ref{thm-gentle-tame}. If $\bA$ is derived equivalent to some skewed-gentle algebra then the statement follows from Theorem~\ref{thm-skewed-gentle-tame} and Theorem~\ref{dereq}.
\end{proof}

\subsection{Proof of Corollary~\ref{c1}}\label{s25}

Following \cite{ka}, for a cycle algebra $\bA=\Mk \kQ/\kI$, we denote by $C(\bA)$ the set of equivalence classes
(with respect to cyclic permutation) of repetition-free cyclic paths $w_1\cdots w_n$ in $\kQ$ such
that $w_iw_{i+1}\in \kI$ for all $i$, where we set $n+1=1$. Moreover, we write $l(c)$ for the {\em length}
of a cycle $c\in C(\bA)$, i.e. $l(w_1\cdots w_n)=n$. Since $\bA$ is derived equivalent to the skewed-gentle algebra $\bA^{\Omega}$ by Proposition~\ref{sgentle}, 
it follows from \cite{cl} that $\kD_{sg}(\bA)\cong \kD_{sg}(\bA^{\Omega})\cong \kD_{sg}(\bA^{\omega})$. Hence we obtain by \cite{ka} that
$$\kD_{sg}(\bA)\cong \prod_{c\in C(\bA^{\omega})}\frac{\kD^{b}(\md{\Mk})}{[l(c)]}.$$
Since $| C(\bA^{\omega})|=1$ and $l(c)=| R_{\bA} |$ for $c\in C(\bA^{\omega})$.  This finishes the proof of Corollary~\ref{c1} .

\section{Acknowledgements}
This research was partly supported by CODI 
and Estrategia de Sostenibilidad 
2019-2020 (Universidad de Antioquia), and COLCIENCIAS-ECO\-PETROL (Contrato RC. No. 0266-2013)
and was accomplished during the visit of  the first and third authors at  the Instituto
 of Matem\'{a}ticas in the Universidad de Antioquia in Medell\'{\i}n, Colombia.
The hospitality offered by this university are gratefully acknowledged.

\end{document}